\DeclareMathAlphabet{\dutchcal}{U}{dutchcal}{m}{n} % Define Dutchcal command
\numberwithin{equation}{section}
\definecolor{darkred}{rgb}{0.8,0,0}
\DeclareMathOperator{\trace}{\operatorname{trace}}
\DeclareMathOperator{\re}{\operatorname{Re}}
\DeclareMathOperator{\im}{\operatorname{Im}}
\DeclareMathOperator{\nnz}{\operatorname{nnz}}
\DeclareMathOperator{\sparsity}{\operatorname{sp}}
\newcommand{\lcal}{\dutchcal{l}}
\newcommand{\rcal}{\dutchcal{r}}
\theoremstyle{definition}
\newtheorem{definition}{Definition}[section]
\newtheorem{remark}[definition]{Remark}
\newtheorem{example}[definition]{Example}
\newtheorem{theorem}[definition]{Theorem}
\newtheorem{lemma}[definition]{Lemma}
\newtheorem{corollary}[definition]{Corollary}
\title{Identifying Kronecker product factorizations}
\author[1]{Yannis Voet \thanks{yannis.voet@epfl.ch}}
\author[1]{Leonardo De Novellis \thanks{leonardo.denovellis@epfl.ch}}
\affil[1]{\small MNS, Institute of Mathematics, École polytechnique fédérale de Lausanne, Station 8, CH-1015 Lausanne, Switzerland}
\date{\today}
\begin{document}

\maketitle

\begin{abstract}
The Kronecker product is an invaluable tool for data-sparse representations of large networks and matrices with countless applications in machine learning, graph theory and numerical linear algebra. In some instances, the sparsity pattern of large matrices may already hide a Kronecker product. Similarly, a large network, represented by its adjacency matrix, may sometimes be factorized as a Kronecker product of smaller adjacency matrices. In this article, we determine all possible Kronecker factorizations of a binary matrix and visualize them through its decomposition graph. Such sparsity-informed factorizations may later enable good (approximate) Kronecker factorizations of real matrices or reveal the latent structure of a network. The latter also suggests a natural visualization of Kronecker graphs. \\

\noindent \textbf{Keywords}:
Binary matrix, Kronecker product, Kronecker graph.
\vskip 0.1cm 
\noindent \textbf{2020 MSC}: 15A23, 15B34, 65F50.
\end{abstract}

\section{Introduction}
This work focuses on finding all possible Kronecker product factorizations of a binary (or Boolean) matrix $A$. That is, given $A \in \{0,1\}^{n \times n}$, we seek all possible ways of writing
\begin{equation}
\label{eq: factorization}
A = \bigotimes_{i=1}^{\ell} A_i  
\end{equation}
for $\ell > 1$ factor matrices $A_i \in \{0,1\}^{n_i \times n_i}$, $n_i > 1$, whose sizes satisfy $\prod_{i=1}^\ell n_i = n$.

More than 25 years ago, Van Loan \cite{van2000ubiquitous} rightly predicted that the Kronecker product would emerge as an increasingly useful tool in scientific computing. The problem stated in \eqref{eq: factorization} is one of many instances of Kronecker products and is at the forefront of several important applications. For instance, if $A$ is the adjacency matrix of an unweighted graph $G$ representing a network, decomposing it as a Kronecker product effectively represents the graph as a Kronecker product of smaller graphs $G_i$. Factorizing a large graph into smaller graphs allows compressing the network and often helps both analyzing and visualizing it \cite{leskovec2007scalable,leskovec2010kronecker,kepner2011graph}. For those reasons, graph products, and in particular the Kronecker product (sometimes also called direct, cardinal, categorical or tensor product) have been well studied by the graph theory community \cite{imrich1998factoring,hammack2011handbook}. A graph $G$ is a Kronecker product if there exists a graph $G'$ isomorphic to $G$ such that its adjacency matrix is a Kronecker product \cite{weichsel1962kronecker}. In other words, there exists a permutation matrix $P$ such that $P^TAP$ is a Kronecker product \cite{calderoni2021direct}. This definition implies that identifying Kronecker product graphs is tightly intertwined with graph isomorphism problems, which are notoriously difficult. Additionally, computational tools for identifying Kronecker products graphs are still rather primitive. Imrich et al. \cite{imrich1998factoring} mostly studied theoretical questions of existence and uniqueness of graph factorizations (also for other products) and proposed a polynomial time algorithm for factorizing connected non-bipartite graphs. Recognizing the limitations of Imrich's algorithm, Calderoni et al. \cite{calderoni2023heuristic} recently proposed a heuristic for factorizing general graphs. 

Apart from the theoretical and computational challenges of graph factorizations, Kronecker product graphs play an important role in modeling large networks. In the machine learning community, Leskovec et al. \cite{leskovec2007scalable,leskovec2010kronecker} realized that Kronecker product graphs successfully mimic several key properties of real networks and proposed a Kronecker graph model by taking successive products with an initiator graph. In this context, the problem consists in fitting a real network to a Kronecker graph instead of exactly factorizing it.

The problem addressed in this contribution is related albeit different from the graph factorization problem. Here we assume that the binary matrix $A$ is given and directly attempt to factorize it. On the one hand, this eliminates the hurdle of finding a permutation of $A$ such that it becomes factorizable. On the other hand, we must still cope with potential problems of non-uniqueness. A prominent application of our research lies in (approximately) factorizing large sparse matrices. Given \emph{fixed} sizes $n_1$ and $n_2$ such that $n=n_1n_2$, Van Loan and Pitsianis \cite{van1993approximation} presented in the early 1990s an algorithm for computing the best Kronecker product factorization of a general matrix $A \in \mathbb{R}^{n \times n}$ in the Frobenius norm. In other words, they find the best factor matrices $B_i \in \mathbb{R}^{n_i \times n_i}$ such that
\begin{equation}
\label{eq: nkp_problem}
    \|B - B_1 \otimes B_2 \|_F
\end{equation}
is minimized. Their algorithm has since then been generalized to Kronecker products of an arbitrary number of matrices $B_i$ for $i=1,\dots,\ell$ \cite{langville2004akronecker}, although the resulting factorization may no longer be optimal. Unfortunately, neither Van Loan's original algorithm nor subsequent generalizations are easy to apply in a ``black-box'' fashion. Indeed, some of the key parameters are the sizes $n_i$ of the factor matrices $A_i$ and their number $\ell$. Even for a fixed factorization length $\ell$, there generally exist several tuples of sizes $\bm{n}=(n_1,n_2,\dots,n_\ell)$ satisfying $n_1n_2\cdots n_\ell=n$ and without any background information on the problem's origin, choosing the ``right'' length $\ell$ and the ``right'' sizes $\bm{n}$ is not obvious at all. By ``right'' we mean the factorization that minimizes some objective function or error measure such as the Frobenius norm in \eqref{eq: nkp_problem}. Clearly, Van Loan's algorithm (and subsequent generalizations) will deliver an approximate factorization for arbitrary choices of length $\ell$ and compatible sizes $\bm{n}$. However, it may produce a very poor ``approximation'' unless $\ell$ and $\bm{n}$ are suitably chosen. Since such approximations are oftentimes used as preconditioners within iterative solvers \cite{van1993approximation,langville2004akronecker,nagy2006kronecker,voet2025preconditioning}, it could have rippling effects down the computational pipeline. Instead, depending on the origin of the matrix, there often exist natural choices for $\ell$ and $\bm{n}$ that will deliver remarkably good factorizations. In some instances, those choices are naturally encoded in the sparsity pattern of the matrix. For example, matrices arising from tensorized finite element discretizations of partial differential equations (PDEs) often hide a Kronecker product in their sparsity pattern, although the matrix itself is rarely exactly factorizable \cite{hofreither2018black,voet2025mass}. Thus, our work will not attempt to factorize the matrices themselves but only their sparsity pattern encoded by a binary matrix. Clearly, if a real matrix $B$ already happens to be a Kronecker product, then so is its sparsity pattern $A$. Although the converse does not hold, the structure encoded in the sparsity pattern may still suggest suitable sizes for approximately factorizing $B$. We will therefore assume that the matrices one attempts to (approximately) factorize are sparse. We note that the specific problem of factorizing permutations has already been addressed in \cite{egner1997decomposing}, where an algorithm based on group theory arguments is also presented. Conversely, for completely dense matrices, the sparsity pattern does not help and one may need to compute all factorizations and retain the best one, as was suggested in \cite{cai2022kopa} for applications in image analysis.

In this article, we present a theory and algorithm for computing all possible factorizations of the form \eqref{eq: factorization}. In particular, the algorithm finds suitable lengths $\ell$ and sizes $\bm{n}$ that may be directly supplied as input parameters to Van Loan's algorithm. In general, one is interested in computing the factorization of greatest length since it yields the greatest compression.

The outline for the rest of the article is as follows: after setting up the problem in \Cref{se: problem_description} and recalling some basic properties of the Kronecker product, we propose in \Cref{se: decomposable_matrices} a fast algorithm for determining whether a sparse binary matrix $A$ is factorizable for \emph{fixed} sizes $(n_1,n_2)$. Remarkably, as shown later in the same section, factorizations of length $\ell=2$ completely describe the structure of $A$ and enable finding factorizations of length $\ell > 2$. Subsequently, we present in \Cref{se: decomposition_graph} the decomposition graph, a simple and elegant way of visualizing all possible Kronecker factorizations of $A$. \Cref{se: applications} is then devoted to applications. The first one pertains to space-time discretizations of PDEs, whose coefficient matrix is approximated by a length $4$ Kronecker product. The second one exploits the Kronecker structure of the adjacency matrix of a network for visualizing Kronecker graphs. Lastly, the third one identifies a Kronecker product structure within a subsequence of unitary operations encoding a quantum gate. Finally, conclusions are stated in \Cref{se: conclusion}.

\section{Problem description}
\label{se: problem_description}
In this section, we introduce some first definitions and preliminary results to set up the problem. Throughout this article, we assume that $A \in \mathbb{B}^{n \times n}$ is a large sparse binary matrix, where $\mathbb{B}=\{0,1\}$. This matrix may for instance define the adjacency matrix of a large network or encode the sparsity pattern of a matrix with real or complex entries. The sparsity pattern of a matrix $X \in \mathbb{R}^{n \times n}$, denoted $\sparsity(X)$, simply keeps track of the position of nonzero entries:
\begin{equation*}
    \sparsity(X)=\{(i,j) \colon x_{ij} \neq 0,\; 1 \leq i,j \leq n\}.
\end{equation*}
The binary matrix $A$ is then defined as
\begin{equation*}
a_{ij}=
    \begin{cases}
        1 & \text{if } (i,j) \in \sparsity(X), \\
        0 & \text{if } (i,j) \notin \sparsity(X).
    \end{cases}
\end{equation*}
When handling binary variables, the addition ``$+$'' and multiplication ``$\cdot$'' refer to the standard Boolean addition and multiplication, whose truth table is recalled in \Cref{fig: boolean_addition,fig: boolean_multiplication}, respectively.

\begin{table}[H]
     \centering
     \begin{subtable}[t]{0.48\textwidth}
    \centering
    \begin{tabular}{|m{1cm}|m{1cm}|m{1cm}|}
    \hline
     $x$ & $y$ & $x+y$ \\
     \hline
     0 & 0 & 0\\
     0 & 1 & 1\\
     1 & 0 & 1\\
     1 & 1 & 1\\
    \hline
    \end{tabular}
    \caption{Boolean addition}
    \label{fig: boolean_addition}
     \end{subtable}
     \hfill
     \begin{subtable}[t]{0.48\textwidth}
    \centering
    \begin{tabular}{|m{1cm}|m{1cm}|m{1cm}|}
    \hline
     $x$ & $y$ & $x \cdot y$ \\
     \hline
     0 & 0 & 0\\
     0 & 1 & 0\\
     1 & 0 & 0\\
     1 & 1 & 1\\
    \hline
    \end{tabular}
    \caption{Boolean multiplication}
    \label{fig: boolean_multiplication}
     \end{subtable}
     \hfill
    \caption{Boolean operations}
    \label{fig: boolean_operations}
\end{table}

Since $\sparsity(X+Y) \subseteq \sparsity(X) \cup \sparsity(Y)$, for two matrices $X,Y \in \mathbb{R}^{n \times n}$, the Boolean addition is a natural choice in this context and simply describes the worst case fill-in when summing two sparse matrices. Also note that the Boolean multiplication reduces to the standard multiplication of real or complex numbers. However, the operation that is really at the center of our attention is the \emph{Kronecker product}. Given two matrices $X \in \mathbb{R}^{n_1 \times n_1}$ and $Y \in \mathbb{R}^{n_2 \times n_2}$, their Kronecker product $X \otimes Y \in \mathbb{R}^{n_1n_2 \times n_1n_2}$ is defined as the block matrix
\begin{equation*}
X \otimes Y = 
\begin{pmatrix}
x_{11}Y & x_{12}Y & \cdots & x_{1n_1}Y \\
x_{21}Y & x_{22}Y & \cdots & x_{2n_1}Y \\
\vdots  & \vdots  & \ddots & \vdots \\
x_{n_11}Y & x_{n_12}Y & \cdots & x_{n_1n_1}Y
\end{pmatrix}.
\end{equation*}
The Kronecker product satisfies some basic properties of associativity and distributivity, whose proof is commonly found in standard textbooks (see e.g. \cite{horn1991topics}). Given matrices $X,Y,Z$ of conforming size,
\begin{align*}
    (X \otimes Y) \otimes Z &= X \otimes (Y \otimes Z) & &\text{(associativity)}, \\
    X \otimes (Y + Z) &= X \otimes Y + X \otimes Z & &\text{(distributivity I)}, \\
    (Y + Z) \otimes X &= Y \otimes X + Z \otimes X & &\text{(distributivity II)}. 
\end{align*}
Obviously, the same definition and properties hold for binary matrices, provided one substitutes the standard addition and multiplication with their Boolean counterpart. Among the binary matrices encountered in applications, decomposable ones are of great interest.

\begin{definition}[Decomposable matrix]
\label{def: decomposable_mat}
A matrix $A \in \mathbb{B}^{n \times n}$ is called \emph{decomposable} (with respect to the Kronecker product) if there exists an integer $\ell>1$ and factor matrices $A_i \in \mathbb{B}^{n_i \times n_i}$ with $n_i > 1$ such that
\begin{equation}
\label{eq: decomposition}
A = \bigotimes_{i=1}^{\ell} A_i.
\end{equation}
The integer $\ell$ is called the \emph{length} of the factorization.
\end{definition}

\begin{definition}[Prime matrix]
A matrix $A \in \mathbb{B}^{n \times n}$ is called \emph{prime} if it cannot be decomposed.    
\end{definition}

In particular, note that $A$ is necessarily prime if $n$ is prime. Among all possible decompositions of $A$, so-called \emph{prime decompositions} are particularly important in the forthcoming discussion.

\begin{definition}[Prime decomposition]
A decomposition \eqref{eq: decomposition} is called \emph{prime} if all its factor matrices are prime. 
\end{definition}

In other words, prime decompositions cannot be further expanded into a factorization of greater length. A decomposition (or factorization) of $A$ is commonly identified with the tuple $\bm{n}=(n_1,n_2,\dots,n_\ell)$ specifying the sizes of the factor matrices in the decomposition. The size of the matrix $A$ is then read from the product $n=\Pi(\bm{n}):=\Pi_{i=1}^\ell n_i$. The next lemma shows that this identification does not cause any ambiguity in all practically relevant cases.

\begin{lemma}
\label{lem: uniqueness}
For fixed sizes $(n_1,n_2,\dots,n_\ell)$, the factorization of $A \in \mathbb{B}^{n \times n} \setminus \{0\}$, if it exists, is unique.
\end{lemma}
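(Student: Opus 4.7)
The plan is to prove this by induction on the length $\ell$, with the base case $\ell=2$ containing all the substance and the inductive step being a routine application of associativity.

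For the base case, suppose $A = A_1 \otimes A_2 = B_1 \otimes B_2$ with $A_i, B_i \in \mathbb{B}^{n_i\times n_i}$ and $A\neq 0$. Partition $A$ into an $n_1 \times n_1$ grid of $n_2 \times n_2$ blocks. By the definition of the Kronecker product, the $(i_1,j_1)$ block of $A$ equals $(A_1)_{i_1 j_1}\cdot A_2$, which in the Boolean setting is simply $A_2$ if $(A_1)_{i_1 j_1}=1$ and the zero block otherwise. Since $A \neq 0$, at least one block is nonzero, and every nonzero block in this partition equals $A_2$. Therefore $A_2$ is uniquely determined as the common value of the nonzero blocks, and $A_1$ is then uniquely determined by the rule $(A_1)_{i_1 j_1} = 1$ iff block $(i_1, j_1)$ of $A$ is nonzero. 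Applying the same argument to $A = B_1 \otimes B_2$ gives $B_2 = A_2$ and $B_1 = A_1$.

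For the inductive step, assume the claim holds for factorizations of length $\ell-1$, and suppose
\begin{equation*}
A = A_1 \otimes A_2 \otimes \cdots \otimes A_\ell = B_1 \otimes B_2 \otimes \cdots \otimes B_\ell
\end{equation*}
with matching sizes $(n_1,\ldots,n_\ell)$ and $A \neq 0$. By associativity this can be rewritten as
\begin{equation*}
A = A_1 \otimes \bigl(A_2 \otimes \cdots \otimes A_\ell\bigr) = B_1 \otimes \bigl(B_2 \otimes \cdots \otimes B_\ell\bigr),
\end{equation*}
which is a length-$2$ factorization of $A$ with fixed sizes $(n_1,\, n_2 n_3 \cdots n_\ell)$. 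Since $A\neq 0$, neither factor can vanish. The base case then forces $A_1 = B_1$ and $A_2 \otimes \cdots \otimes A_\ell = B_2 \otimes \cdots \otimes B_\ell$, and the induction hypothesis applied to this latter nonzero product (with fixed sizes $(n_2,\ldots,n_\ell)$) yields $A_i = B_i$ for $i=2,\ldots,\ell$.

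The main obstacle, or rather the key conceptual point to isolate, is the fact that over $\mathbb{B}$ there is no multiplicative rescaling ambiguity: in the real-valued setting one has $A_1\otimes A_2 = (\alpha A_1)\otimes (\alpha^{-1}A_2)$ for any $\alpha\neq 0$, but under Boolean arithmetic the only admissible scalars are $0$ and $1$, so the nonzero blocks of the block partition must coincide identically rather than up to scale. Beyond this observation, the argument is purely combinatorial and the hypothesis $A\neq 0$ is used exactly to guarantee that at least one block is nonzero, so that $A_2$ can actually be read off from the block structure.
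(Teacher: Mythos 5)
Your proof is correct and follows essentially the same route as the paper's: a blockwise comparison for the length-$2$ case (using that Boolean scalars are only $0$ or $1$, so nonzero blocks must coincide exactly), followed by recursion via associativity. The only cosmetic difference is that you peel off the leftmost factor and recurse on the right, whereas the paper groups the first $\ell-1$ factors and peels off the rightmost.
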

\begin{proof}
Assume that $A$ admits two $(n_1,n_2,\dots,n_\ell)$ factorizations with factor matrices $\{A_i\}_{i=1}^\ell$ and $\{\hat{A}_i\}_{i=1}^\ell$. Then,
\begin{equation*}
    A = A_1 \otimes A_2 \otimes \dots \otimes A_{\ell-1} \otimes A_\ell = \hat{A}_1 \otimes \hat{A}_2 \otimes \dots \otimes \hat{A}_{\ell-1} \otimes \hat{A}_\ell.
\end{equation*}
From the associativity of the Kronecker product, the above relation may be rewritten
\begin{equation*}
    X \otimes A_\ell = \hat{X} \otimes \hat{A}_\ell
\end{equation*}
where $X=A_1 \otimes A_2 \otimes \dots \otimes A_{\ell-1}$ and $\hat{X}=\hat{A}_1 \otimes \hat{A}_2 \otimes \dots \otimes \hat{A}_{\ell-1}$. Thus, $A$ is an $n_1n_2 \dots n_{\ell-1} \times n_1n_2 \dots n_{\ell-1}$ block matrix with blocks of size $n_\ell \times n_\ell$. Its $(i,j)$th block $A_{i,j}$ is given by
\begin{equation*}
    A_{i,j} = x_{ij}A_\ell = \hat{x}_{ij}\hat{A}_\ell \quad i,j=1,\dots,n_1n_2 \dots n_{\ell-1}.
\end{equation*}
Recalling that $X,A_\ell$ and $\hat{X},\hat{A}_\ell$ are binary matrices and none of them can be zero (since $A \neq 0$), two cases must be distinguished:
\begin{itemize}[noitemsep]
    \item If $A_{i,j} \neq 0$, then $x_{ij}=\hat{x}_{ij}=1$ and $A_\ell=\hat{A}_\ell$.
    \item If $A_{i,j} = 0$, then $x_{ij}=\hat{x}_{ij}=0$.
\end{itemize}
In either case $x_{ij}=\hat{x}_{ij}$ and consequently, $X=\hat{X}$. Moreover, since $A \neq 0$, the first case is necessarily encountered at least once and yields $A_\ell=\hat{A}_\ell$. Recursively applying the same arguments on $X=\hat{X}$ proves that $A_i=\hat{A}_i$ for all $i=1,\dots,\ell$.
\end{proof}

Note that \Cref{lem: uniqueness} does not hold for real valued matrices due to the scaling indeterminacy. Indeed, for $X \in \mathbb{R}^{n_1 \times n_1}$ and $Y \in \mathbb{R}^{n_2 \times n_2}$, $X \otimes Y = (\alpha X) \otimes (\alpha^{-1}Y)$ for any $\alpha \neq 0$. For binary matrices, \Cref{lem: uniqueness} eliminates this problem and allows identifying a factorization with the size of its factors. Nevertheless, as shown in the next couple of examples, a given matrix $A \in \mathbb{B}^{n \times n}$ may still have multiple distinct factorizations with factor matrices of different size.

\begin{example}
\label{ex: max_mat_12}
From the following string of identities
\begin{align*}
A &=
\begin{pmatrix}
1 & 1 \\
0 & 0
\end{pmatrix}
\otimes
\begin{pmatrix}
1 & 1 \\
0 & 0
\end{pmatrix}
\otimes
\begin{pmatrix}
1 & 1 & 1 \\
0 & 0 & 0 \\
1 & 1 & 1
\end{pmatrix} \\
&=
\begin{pmatrix}
1 & 1 \\
0 & 0
\end{pmatrix}
\otimes
\begin{pmatrix}
1 & 1 & 1 \\
1 & 1 & 1 \\
0 & 0 & 0
\end{pmatrix}
\otimes
\begin{pmatrix}
1 & 1 \\
0 & 0
\end{pmatrix} \\
&=
\begin{pmatrix}
1 & 1 & 1 \\
0 & 0 & 0 \\
0 & 0 & 0
\end{pmatrix}
\otimes
\begin{pmatrix}
1 & 1 \\
1 & 1
\end{pmatrix}
\otimes
\begin{pmatrix}
1 & 1 \\
0 & 0
\end{pmatrix},
\end{align*}
it appears that $A$ admits $(2,2,3)$, $(2,3,2)$ and $(3,2,2)$ factorizations. Moreover, those factorizations are all prime since both $2$ and $3$ are prime.
\end{example}

Interestingly, decomposable matrices may have prime factorizations of different length.

\begin{example}
\label{ex: non_max_mat_12}
From the equalities
\begin{align*}
A &=
\begin{pmatrix}
1 & 0 & 0 \\
0 & 0 & 0 \\
0 & 0 & 0
\end{pmatrix}
\otimes
\begin{pmatrix}
1 & 0 & 0 & 0 \\
0 & 0 & 0 & 0 \\
0 & 0 & 0 & 0 \\
1 & 0 & 0 & 0
\end{pmatrix} \\
&=
\begin{pmatrix}
1 & 0 \\
0 & 0
\end{pmatrix}
\otimes
\begin{pmatrix}
1 & 0 \\
1 & 0
\end{pmatrix}
\otimes
\begin{pmatrix}
1 & 0 & 0 \\
0 & 0 & 0 \\
0 & 0 & 0
\end{pmatrix},
\end{align*}
we conclude that $A$ admits $(3,4)$ and $(2,2,3)$ factorizations. Note that the first factorization is prime since the $4 \times 4$ matrix cannot be further decomposed into a Kronecker product of $2 \times 2$ matrices and the second factorization is evidently prime since $2$ and $3$ are both prime.
\end{example}

Among all possible factorizations of $A$, we are particularly interested in those of greatest length. Such factorizations are relevant in many applications since they generally provide the greatest compression. Indeed, storing the factor matrices $\{A_i\}_{i=1}^\ell$ only requires a fraction of the memory for $A$. The same holds true when subsequently (approximately) factorizing a real matrix $B$ having the same sparsity pattern as $A$. In the next section, we explain how to find all length $2$ factorizations of a binary matrix $A$ and how to eventually combine them to produce factorizations of greater length.

\section{Decomposable matrices}
\label{se: decomposable_matrices}
As explained later in the article, factorizations of length $\ell=2$ allow finding factorizations of length $\ell>2$. Thanks to this remarkable property, we may exclusively focus on the former, which drastically simplifies the problem. In this case, if $A$ admits an $(n_1,n_2)$ factorization, then $n_1$ and $n_2$ are divisors of $n$. Denoting $[n] \subset \mathbb{N}$ the set of positive divisors of $n$, a pair of integers $(n_1,n_2)$ with $n_1, n_2 \in [n] \setminus \{1,n\}$ and $n_1n_2=n$ is called a \emph{compatible pair} and we denote $\dutchcal{C}$ the set of all compatible pairs. Clearly, there are as many compatible pairs as there are non-trivial divisors of $n$ and this number is related to the prime decomposition of $n$. Recall that for any natural integer $n \in \mathbb{N}$, there exist prime numbers $p_1,\dots,p_m$ and integer exponents $k_1,\dots,k_m$ such that
\begin{equation*}
    n = \prod_{j=1}^m p_j^{k_j}.
\end{equation*}
The number of divisors of $n$ is therefore $\prod_{j=1}^m (k_j+1)$. Excluding the trivial divisors $1$ and $n$, the cardinality of $\dutchcal{C}$ is
\begin{equation*}
    |\dutchcal{C}| = \prod_{j=1}^m (k_j+1) - 2.
\end{equation*}
The set $\dutchcal{C}$ contains all pairs of candidate sizes for attempting a factorization. Decomposable matrices are factorizable for at least one compatible pair and clearly form a very special subset of binary matrices. As a matter of fact, the next lemma shows that most binary matrices one could form are in fact prime (even if $n$ is not prime).

\begin{lemma}
\label{lem: random_matrix_factorizability}
Let $A \in \mathbb{B}^{n \times n}$. Then,
\begin{equation*}
    \mathbb{P}[A \text{ is factorizable}] \leq \sum_{(n_1,n_2) \in \dutchcal{C}} \frac{2^{n_1^1+n_2^2}}{2^{n^2}}.
\end{equation*}
\end{lemma}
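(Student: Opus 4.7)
The plan is to use a simple counting/union-bound argument, after first reducing factorizability of \emph{any} length to factorizability of length $2$.

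First I would note that implicit in the statement is a uniform distribution on $\mathbb{B}^{n \times n}$, so $\mathbb{P}[A = A_0] = 2^{-n^2}$ for each $A_0 \in \mathbb{B}^{n \times n}$. The key preliminary observation is that $A$ is factorizable (in the sense of \Cref{def: decomposable_mat}) if and only if there exists at least one compatible pair $(n_1,n_2) \in \dutchcal{C}$ such that $A$ admits an $(n_1,n_2)$ factorization. The ``if'' direction is immediate from the definition; the ``only if'' direction follows from associativity of the Kronecker product: any factorization of length $\ell > 2$, say $A = A_1 \otimes A_2 \otimes \dots \otimes A_\ell$, can be regrouped as $A = A_1 \otimes (A_2 \otimes \dots \otimes A_\ell)$, yielding a length-$2$ factorization with sizes $(n_1, n_2 n_3 \cdots n_\ell) \in \dutchcal{C}$ (since each $n_i > 1$, neither factor has size $1$ or $n$).

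Next, for a fixed compatible pair $(n_1,n_2) \in \dutchcal{C}$, I would estimate the number of matrices in $\mathbb{B}^{n \times n}$ that admit an $(n_1,n_2)$ factorization. Each such matrix is obtained as $A_1 \otimes A_2$ for some $A_1 \in \mathbb{B}^{n_1 \times n_1}$ and $A_2 \in \mathbb{B}^{n_2 \times n_2}$. Since there are exactly $2^{n_1^2}$ and $2^{n_2^2}$ choices for the two factors, the number of factorizable matrices for this compatible pair is at most $2^{n_1^2} \cdot 2^{n_2^2} = 2^{n_1^2 + n_2^2}$. (By \Cref{lem: uniqueness}, this upper bound is actually tight up to the handling of the zero matrix, but only the upper bound is needed here.) Hence
\begin{equation*}
\mathbb{P}[A \text{ admits an } (n_1,n_2) \text{ factorization}] \leq \frac{2^{n_1^2 + n_2^2}}{2^{n^2}}.
\end{equation*}

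Finally, I would combine these two observations via a union bound over the finite set $\dutchcal{C}$:
\begin{equation*}
\mathbb{P}[A \text{ is factorizable}] = \mathbb{P}\!\left[\bigcup_{(n_1,n_2) \in \dutchcal{C}} \{A \text{ admits an } (n_1,n_2) \text{ factorization}\}\right] \leq \sum_{(n_1,n_2) \in \dutchcal{C}} \frac{2^{n_1^2 + n_2^2}}{2^{n^2}},
\end{equation*}
which is the claimed inequality. There is no real obstacle here; the only subtlety is justifying the reduction to length-$2$ factorizations so that the union can be taken over the finite set $\dutchcal{C}$ rather than over all possible lengths and size tuples simultaneously.
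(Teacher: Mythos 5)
Your proof is correct and follows essentially the same counting-plus-union-bound argument as the paper; you are in fact slightly more careful, since you explicitly justify the reduction to length-$2$ factorizations via associativity and write ``at most'' for the count of $(n_1,n_2)$-factorizable matrices (the map $(A_1,A_2)\mapsto A_1\otimes A_2$ is not injective on the zero matrix). Note also that your exponent $n_1^2+n_2^2$ is the intended one; the paper's statement and proof contain a typo, writing $n_1^1$ where $n_1^2$ is meant.
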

\begin{proof}
Let $A \in \mathbb{B}^{n \times n}$ have i.i.d. $\operatorname{Be}(1/2)$ random entries (i.e. independent and identically distributed random variables that take value $1$ or $0$, each with probability $1/2$). The number of binary matrices $A$ is $2^{n^2}$ while the number of $(n_1,n_2)$ factorizable matrices for a given compatible pair $(n_1,n_2)$ is $2^{n_1^1+n_2^2}$. Thus, the probability that $A$ admits an $(n_1,n_2)$ factorization is $2^{n_1^1+n_2^2}/2^{n^2}$. The probability that $A$ is factorizable for any compatible pair is then obtained by bounding the probability of the union by the sum of probabilities.
\end{proof}

This article would not make much sense if we were to consider random binary matrices. However, for specific applications, the sparsity pattern of binary matrices often hides a Kronecker product. Our algorithm for detecting it relies on a very special subset of matrices that are factorizable for any compatible pair.

\begin{definition}[Maximal matrix]
A decomposable matrix $A \in \mathbb{B}^{n \times n}$ is called \emph{maximal} if it admits an $(n_1,n_2)$ factorization for any compatible pair $(n_1,n_2) \in \dutchcal{C}$.
\end{definition}

So far, the only means of checking whether a matrix is maximal amounts to verifying that it is factorizable for any compatible pair $(n_1,n_2) \in \dutchcal{C}$. Some simple examples of maximal matrices include the identity matrix and the matrix of all ones. One can also easily verify that the matrix $A$ in \Cref{ex: max_mat_12} is maximal. Other less obvious examples are the so-called \emph{basis matrices}.

\begin{definition}[Basis matrix]
The \emph{basis matrices} $\{E_{ij}\}_{i,j=1}^n \subset \mathbb{B}^{n \times n}$ are defined as
\begin{equation*}
    (E_{ij})_{kl} = \delta_{ik}\delta_{jl}
\end{equation*}
where the Kronecker delta symbol is defined as $\delta_{ij}=1$ if $i=j$ and $\delta_{ij}=0$ if $i \neq j$.
\end{definition}

In other words, the basis matrix $E_{ij}$ has a single component equal to $1$ in position $(i,j)$ and $0$ elsewhere. Over the real (or complex) field, the set of all basis matrices $\{E_{ij}\}_{i,j=1}^n$ forms the so-called canonical basis for the vector space of real (or complex) matrices of size $n$. However, they are brought up here for another reason: decomposable basis matrices are maximal. This result is the object of the next lemma whose constructive proof is central to our forthcoming analysis.

\begin{lemma}
\label{lem: maximal_basis_matrix}
All decomposable basis matrices $E_{ij}$ are maximal for $i,j=1,\dots,n$.
\end{lemma}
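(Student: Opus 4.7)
My plan is to prove the lemma constructively by writing down an explicit factorization for every compatible pair.

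The key preliminary observation is that the Kronecker product of two binary matrices has exactly $\nnz(A_1)\cdot\nnz(A_2)$ nonzero entries. Since $E_{ij}$ has a single nonzero, any factorization $E_{ij} = A_1 \otimes A_2$ must have each factor equal to a basis matrix of its respective dimension. This immediately suggests the form the candidate factorization should take.

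Fix a compatible pair $(n_1,n_2) \in \dutchcal{C}$. Applying Euclidean division of $i-1$ and $j-1$ by $n_2$, I define uniquely
\begin{equation*}
p = \left\lceil \frac{i}{n_2} \right\rceil, \quad r = i - (p-1)n_2, \quad q = \left\lceil \frac{j}{n_2} \right\rceil, \quad s = j - (q-1)n_2,
\end{equation*}
so that $1 \le p,q \le n_1$ and $1 \le r,s \le n_2$. I then claim
\begin{equation*}
E_{ij}^{(n)} = E_{pq}^{(n_1)} \otimes E_{rs}^{(n_2)},
\end{equation*}
where the superscript indicates the size of the basis matrix. The verification is a direct calculation from the block definition of the Kronecker product: the $(p,q)$-block of the right-hand side equals $E_{rs}^{(n_2)}$ and all other blocks vanish, so the unique nonzero entry sits at global row $(p-1)n_2 + r = i$ and column $(q-1)n_2 + s = j$, matching $E_{ij}^{(n)}$.

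Since $(n_1,n_2)$ was an arbitrary compatible pair, this exhibits a factorization of $E_{ij}$ for every such pair, proving maximality. I don't anticipate a real obstacle: the content is essentially a bookkeeping exercise converting a flat index $(i,j)$ into its block/intra-block coordinates, and the observation that basis matrices can only factor into basis matrices makes the construction forced rather than ingenious. The only thing to be careful about is that the lemma is vacuous when $n$ is prime (then $\dutchcal{C} = \emptyset$ and no basis matrix is decomposable), so the hypothesis of decomposability is used implicitly to guarantee that at least one compatible pair exists before the construction is invoked.
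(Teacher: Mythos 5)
Your proposal is correct and follows essentially the same route as the paper: both proofs use the Euclidean division of $i-1$ and $j-1$ by $n_2$ (your ceiling/remainder formulas are just a reparametrization of that) to exhibit the explicit factorization $E_{ij}=E_{i_1j_1}\otimes E_{i_2j_2}$ for every compatible pair. Your opening observation that the factors are forced to be basis matrices is a nice extra motivation but is not needed for the existence argument, which is all the lemma requires.
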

\begin{proof}
Firstly, since $E_{ij}$ is decomposable, $n$ cannot be prime and it has at least one non-trivial divisor. Let $n_2 \in [n] \setminus \{1,n\}$. Then, the Euclidean division of $i-1$ and $j-1$ by $n_2$ yields
\begin{align*}
    i-1&=\tilde{i}_1 n_2 + \tilde{i}_2, \\
    j-1&=\tilde{j}_1 n_2 + \tilde{j}_2,
\end{align*}
for integers $0 \leq \tilde{i}_1,\tilde{j}_1 < n_1 = d/n_2$ and $0 \leq \tilde{i}_2,\tilde{j}_2 < n_2$. Finally, setting $i_k=\tilde{i}_k+1$ and $j_k = \tilde{j}_k+1$ for $k=1,2$, we obtain
\begin{equation*}
    E_{ij} = E_{i_1 j_1} \otimes E_{i_2 j_2}
\end{equation*}
for basis matrices $E_{i_1 j_1} \in \mathbb{B}^{n_1 \times n_1}$ and $E_{i_2 j_2} \in \mathbb{B}^{n_2 \times n_2}$. Thus, $E_{ij}$ admits an $(n_1,n_2)$ factorization. Since this construction holds for any divisor $n_2 \in [n] \setminus \{1,n\}$, $E_{ij}$ is maximal.
\end{proof}

\Cref{lem: maximal_basis_matrix} lays the foundation of our method and allows us to easily check whether \emph{any} binary matrix $A \in \mathbb{B}^{n \times n}$ is factorizable. Some interesting ideas in that direction were drafted in \cite{wei2023kronecker} but the authors reached erroneous conclusions. The next few paragraphs present a complete and correct algorithm. Firstly, the matrix is expanded in terms of basis matrices as
\begin{equation}
\label{eq: basis_representation}
    A = \sum_{(i,j) \in \sparsity(A)} E_{ij}.
\end{equation}
Secondly, given a compatible pair $(n_1,n_2)$, each $E_{ij}$ is factorized as $E_{ij} = E_{i_1 j_1} \otimes E_{i_2 j_2}$ with $1 \leq i_1,j_1 \leq n_1$ and $1 \leq i_2,j_2 \leq n_2$ as done in the proof of \Cref{lem: maximal_basis_matrix}. Note that the only information that matters in \eqref{eq: basis_representation} are the integer pairs $(i_1,j_1)$ and $(i_2,j_2)$, not the actual matrices. Moreover, for convenience, we map each index pair $(i_1,j_1)$ and $(i_2,j_2)$ to a linear index. For a matrix of size $n$, $l_n(i,j)=(j-1)n+i$ is the linear index corresponding to $(i,j)$. For instance, the linear indices corresponding to the basis matrices $\{E_{11}, E_{21}, E_{12}, E_{22}\} \subset \mathbb{B}^{2 \times 2}$ are $\{1,2,3,4\}$ (in the same order). Reverting back to index pairs and basis matrices is also straightforward from the Euclidean division. To lighten the notation, we drop the dependency on the sizes and simply denote
\begin{align*}
    l_1&=l_{n_1}(i_1,j_1) \\
    l_2&=l_{n_2}(i_2,j_2)
\end{align*}
the linear indices for the index pairs $(i_1,j_1)$ and $(i_2,j_2)$, respectively. A superscript $k$ is then appended to linear indices and index pairs for identifying the $k$th term in the sum \eqref{eq: basis_representation}. Finally, we define the set of pairs of linear indices
\begin{equation*}
    S = \{(l_1^k,l_2^k) \colon k=1,\dots,\nnz(A)\},
\end{equation*}
where $\nnz(A)$ denotes the number of non-zero entries of $A$. Our algorithm then uses the following equivalence for determining whether $A$ is factorizable.

\begin{lemma}
\label{lem: equivalency}
A matrix $A \in \mathbb{B}^{n \times n}$ admits an $(n_1,n_2)$ factorization if and only if there exist subsets $S_1,S_2 \subset \mathbb{N}$ such that
\begin{equation*}
    S = S_1 \times S_2.
\end{equation*}
\end{lemma}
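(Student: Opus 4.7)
The plan is to prove both implications by pivoting on the basis-matrix expansion introduced just before the lemma and the constructive identity $E_{ij}=E_{i_1j_1}\otimes E_{i_2j_2}$ from \Cref{lem: maximal_basis_matrix}. Throughout, I will use that the Euclidean-division map $(i,j)\mapsto\bigl((i_1,j_1),(i_2,j_2)\bigr)$, equivalently the linear-index map $l_n(i,j)\mapsto\bigl(l_{n_1}(i_1,j_1),l_{n_2}(i_2,j_2)\bigr)$, is a bijection between $\{1,\dots,n\}^2$ and $\{1,\dots,n_1\}^2\times\{1,\dots,n_2\}^2$. A useful auxiliary observation is that because the $E_{i_1j_1}\otimes E_{i_2j_2}$ have pairwise disjoint supports as $(i_1,j_1,i_2,j_2)$ varies, Boolean and ordinary sums of such terms coincide in their zero/nonzero pattern.

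For the forward direction, assume $A=B\otimes C$ with $B\in\mathbb{B}^{n_1\times n_1}$ and $C\in\mathbb{B}^{n_2\times n_2}$. Write $B=\sum_{(i_1,j_1)\in\sparsity(B)}E_{i_1j_1}$ and $C=\sum_{(i_2,j_2)\in\sparsity(C)}E_{i_2j_2}$, and apply bilinearity of the Kronecker product (associativity/distributivity, as recalled after the Kronecker product definition) to obtain
\begin{equation*}
A \;=\; \sum_{(i_1,j_1)\in\sparsity(B)}\;\sum_{(i_2,j_2)\in\sparsity(C)} E_{i_1j_1}\otimes E_{i_2j_2}.
\end{equation*}
By \Cref{lem: maximal_basis_matrix}, each summand equals some $E_{ij}$, and by the bijectivity of the index map these $E_{ij}$'s are all distinct. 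Comparing with the canonical expansion $A=\sum_{(i,j)\in\sparsity(A)}E_{ij}$ (which is unique, since basis matrices have pairwise disjoint singleton supports) shows that $\sparsity(A)$ corresponds exactly to $\sparsity(B)\times\sparsity(C)$ through the index map. Passing to linear indices, $S=S_1\times S_2$ with $S_1=\{l_{n_1}(i_1,j_1):(i_1,j_1)\in\sparsity(B)\}$ and $S_2=\{l_{n_2}(i_2,j_2):(i_2,j_2)\in\sparsity(C)\}$.

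For the converse, suppose $S=S_1\times S_2$. Using the inverse of the linear-index map, define $B\in\mathbb{B}^{n_1\times n_1}$ and $C\in\mathbb{B}^{n_2\times n_2}$ by the supports
\begin{equation*}
\sparsity(B)=\{(i_1,j_1):l_{n_1}(i_1,j_1)\in S_1\},\qquad \sparsity(C)=\{(i_2,j_2):l_{n_2}(i_2,j_2)\in S_2\}.
\end{equation*}
Applying the forward computation in reverse gives
\begin{equation*}
B\otimes C \;=\; \sum_{(i_1,j_1)\in\sparsity(B)}\sum_{(i_2,j_2)\in\sparsity(C)} E_{i_1j_1}\otimes E_{i_2j_2} \;=\; \sum_{(i,j)\in T}E_{ij},
\end{equation*}
where $T$ is the image of $\sparsity(B)\times\sparsity(C)$ under the index map. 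By construction this image corresponds, via linear indexing, to $S_1\times S_2=S$, which is exactly $\sparsity(A)$. Hence $B\otimes C=A$, delivering the desired $(n_1,n_2)$ factorization.

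The only delicate point is the uniqueness step in the forward direction: one must rule out cancellations so that the double sum truly enumerates $\sparsity(A)$ without repetition or loss. This is where the disjointness of the supports of the $E_{i_1j_1}\otimes E_{i_2j_2}$, combined with the bijectivity of the Euclidean-division index map from \Cref{lem: maximal_basis_matrix}, is essential, and it ensures that Boolean addition behaves here like ordinary addition. Once this is observed, the equivalence reduces to recognizing a Cartesian product of index sets.
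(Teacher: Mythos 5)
Your proposal is correct and follows essentially the same route as the paper's own proof: expand $A$, $B$, $C$ in basis matrices, use distributivity together with the identity $E_{ij}=E_{i_1j_1}\otimes E_{i_2j_2}$ from the proof of \Cref{lem: maximal_basis_matrix}, and translate to linear indices. You merely spell out the disjointness-of-supports and bijectivity details that the paper leaves implicit, which is a faithful (and slightly more careful) rendering of the same argument.
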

\begin{proof}
The matrix $A$ is $(n_1,n_2)$ factorizable if and only if
\begin{equation}
\label{eq: fact_A}
    A = \left(\sum_{(i_1,j_1) \in I_1} E_{i_1j_1}\right) \otimes \left(\sum_{(i_2,j_2) \in I_2} E_{i_2j_2}\right)
\end{equation}
for subsets
\begin{equation*}
    I_1 \subseteq \{(i_1,j_1) \colon 1 \leq i_1,j_1 \leq n_1\}, \qquad I_2 \subseteq \{(i_2,j_2) \colon 1 \leq i_2,j_2 \leq n_2\}.
\end{equation*}
After mapping the index pairs within $I_1$ and $I_2$ to linear index sets $S_1$ and $S_2$, respectively, the matrix relation \eqref{eq: fact_A} is then equivalent to the set relation $S = \{(l_1,l_2) \colon l_1 \in S_1,\, l_2 \in S_2\} = S_1 \times S_2$ on linear indices.
\end{proof}

Thanks to \Cref{lem: equivalency}, our algorithm checks whether $A$ is factorizable by attempting to write $S$ as a Cartesian product. The latter merely requires ordering the elements of $S$ and is efficiently coded up. Thus, we have substituted the matrix problem with a scalar one that only uses the position of nonzero entries rather than the matrix itself. Once $S_1$ and $S_2$ have been found, reverting back to index pairs yields $I_1$ and $I_2$, the sparsity patterns of the factor matrices constituting the product. The next couple of examples illustrate our argument.

\begin{example}
We would like to determine whether the binary matrix
\begin{equation*}
    A =
    \begin{pmatrix}
        1 & 0 & 0 & 0 \\
        0 & 1 & 0 & 0 \\
        0 & 0 & 1 & 0 \\
        0 & 0 & 0 & 0
    \end{pmatrix}
\end{equation*}
is factorizable for the (only) compatible pair $(2,2)$. By inspection, the answer is trivially negative but we will show it algebraically from the procedure described above. Firstly, the sparsity pattern of $A$ is
\begin{equation*}
    \sparsity(A) = \{(1,1), (2,2), (3,3)\}.
\end{equation*}
Secondly, for the $(2,2)$ compatible pair, we obtain the sets
\begin{align*}
    \{(i_1^k,j_1^k)\}_{k=1}^3 &= \{(1,1),(1,1),(2,2)\}, \\
    \{(i_2^k,j_2^k)\}_{k=1}^3 &= \{(1,1),(2,2),(1,1)\},
\end{align*}
which are mapped to linear indices
\begin{equation*}
    S = \{(l_1^k,l_2^k)\}_{k=1}^3 = \{(1,1),(1,4),(4,1)\}.
\end{equation*}
This set cannot be expressed as a Cartesian product and consequently $A$ is not $(2,2)$ factorizable. Consider now the slightly modified matrix
\begin{equation*}
    A =
    \begin{pmatrix}
        1 & 0 & 0 & 0 \\
        1 & 1 & 0 & 0 \\
        0 & 0 & 0 & 0 \\
        0 & 0 & 0 & 0
    \end{pmatrix}.
\end{equation*}
Its left and right index pairs for the $(2,2)$ compatible pair are
\begin{align*}
    \{(i_1^k,j_1^k)\}_{k=1}^3 &= \{(1,1),(1,1),(1,1)\}, \\
    \{(i_2^k,j_2^k)\}_{k=1}^3 &= \{(1,1),(2,1),(2,2)\},
\end{align*}
to which correspond the linear indices
\begin{equation*}
    S = \{(l_1^k,l_2^k)\}_{k=1}^3 = \{(1,1),(1,2),(1,4)\}.
\end{equation*}
This set is evidently expressed as the Cartesian product $S = S_1 \times S_2$, where
\begin{equation*}
    S_1 = \{1\} \quad \text{and} \quad S_2 = \{1,2,4\}.
\end{equation*}
Therefore, $A$ admits a $(2,2)$ factorization $A=A_1 \otimes A_2$ and by reverting back to index pairs, we obtain
\begin{equation*}
    I_1=\sparsity(A_1) = \{(1,1)\} \quad \text{and} \quad I_2=\sparsity(A_2) = \{(1,1),(2,1),(2,2)\}.
\end{equation*}
\end{example}
These examples again highlight one of the key properties of our method: it is completely matrix-free and only requires the position of the non-zero entries of $A$. Apart from the Euclidean divisions needed for computing the index pairs, the factorizability of $S$ as a Cartesian product is easily verified by sorting its elements.

\begin{remark}
The procedure described above is closely related to Van Loan's algorithm \cite{van1993approximation}. As a matter of fact, the index set $S$ contains the position of the nonzero entries of $\mathcal{R}(A)$, the so-called rearrangement of $A$ \cite{van1993approximation}. Hence, \Cref{lem: equivalency} is equivalent to verifying that $\mathcal{R}(A)$ is rank $1$. Whereas Van Loan's algorithm explicitly forms $\mathcal{R}(A)$, our method only maps the position of the nonzero entries from $A$ to $\mathcal{R}(A)$, which is all that is needed for verifying its factorizability. In \cite{calderoni2023heuristic}, the authors have instead directly used Van Loan's algorithm for checking the factorizability of $A$.
\end{remark}

Our method easily allows checking whether a matrix is factorizable for a given compatible pair. However, in order to completely describe the Kronecker structure of $A$, \emph{all} possible length $2$ factorizations are needed, which we ensure by iterating over all compatible pairs. Although this may seem prohibitive, the number of compatible pairs is bounded by $\sqrt{n}$, yielding complexity estimates. Running the algorithm over all compatible pairs eventually yields the set of all length $2$ factorizations. For future reference, we denote this set $\dutchcal{F}$ such that
\begin{equation*}
    \dutchcal{F}=\{(n_1,n_2) \colon A \text{ is } (n_1,n_2) \text{ factorizable}\} \subseteq \dutchcal{C}.
\end{equation*}
Note that the order within a pair is important: an $(n_1,n_2)$ factorization may exist independently of an $(n_2,n_1)$ factorization. The next lemma is a cornerstone of our method and explains how to combine length $2$ factorizations to produce factorizations of greater length.

\begin{lemma}
\label{lem: 2fact_2_to_1fact_3}
For $A \in \mathbb{B}^{n \times n}$ and integers $l,p,r>1$, the existence of the following factorizations is equivalent
\begin{equation*}
    \exists
    \begin{cases}
        (l,pr) \\
        (pl,r)
    \end{cases}
    \iff \exists \; (l,p,r).
\end{equation*}
\end{lemma}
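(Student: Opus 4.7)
The plan is to treat the two implications separately, with the nontrivial content living in the $\Leftarrow$ direction.

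The forward direction is immediate from the associativity of the Kronecker product recalled in Section~\ref{se: problem_description}: if $A = A_1 \otimes A_2 \otimes A_3$ with factor sizes $(l, p, r)$, then $A = A_1 \otimes (A_2 \otimes A_3)$ exhibits an $(l, pr)$ factorization and $A = (A_1 \otimes A_2) \otimes A_3$ exhibits a $(pl, r)$ factorization.

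For the reverse direction, write the two given length-$2$ factorizations as $A = B_1 \otimes B_2$ with $B_1 \in \mathbb{B}^{l \times l}$, $B_2 \in \mathbb{B}^{pr \times pr}$ and $A = C_1 \otimes C_2$ with $C_1 \in \mathbb{B}^{pl \times pl}$, $C_2 \in \mathbb{B}^{r \times r}$. The strategy I would follow is to show that $B_2$ itself admits a $(p, r)$ factorization; combining this with $A = B_1 \otimes B_2$ and associativity then yields the required $(l, p, r)$ decomposition. The case $A = 0$ is trivial, so assume $A \neq 0$, which forces all four of $B_1, B_2, C_1, C_2$ to be nonzero.

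The key step is to compare the two block structures simultaneously carried by $A$. From $A = B_1 \otimes B_2$, the $(a, b)$-th block of $A$ (viewed as an $l \times l$ array of $(pr) \times (pr)$ blocks) is $(B_1)_{ab}\, B_2$. Refining further by viewing $B_2$ as a $p \times p$ array of $r \times r$ sub-blocks $(B_2)[u, v]$, one obtains a description of $A$ as a $(pl) \times (pl)$ array of $r \times r$ blocks whose entry indexed by $\bigl((a-1)p + u,\,(b-1)p + v\bigr)$ equals $(B_1)_{ab}\,(B_2)[u, v]$. On the other hand, from $A = C_1 \otimes C_2$, the same $r \times r$ block equals $(C_1)_{(a-1)p+u,\,(b-1)p+v}\, C_2$. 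Equating these gives the identity
\begin{equation*}
(B_1)_{ab}\,(B_2)[u, v] \;=\; (C_1)_{(a-1)p+u,\,(b-1)p+v}\, C_2
\end{equation*}
for all admissible $a, b, u, v$.

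From here the conclusion follows quickly: since $B_1 \neq 0$, fix any $(a_0, b_0)$ with $(B_1)_{a_0 b_0} = 1$ and define $D \in \mathbb{B}^{p \times p}$ by $D_{uv} := (C_1)_{(a_0 - 1)p + u,\, (b_0 - 1)p + v}$. The identity then reads $(B_2)[u, v] = D_{uv}\, C_2$, i.e.\ $B_2 = D \otimes C_2$, so $A = B_1 \otimes D \otimes C_2$ is a valid $(l, p, r)$ factorization. The only mildly delicate point, and the main ``obstacle'' to anticipate, is the correct bookkeeping when aligning the two different block views of $A$; no deeper tool than the two hypotheses together with $A \neq 0$ is needed. (Alternatively, one could symmetrically show that $C_1$ admits an $(l, p)$ factorization, but the argument above is slightly more direct.)
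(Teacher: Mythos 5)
Your proof is correct and follows essentially the same route as the paper: the easy direction by associativity, and the hard direction by comparing the two block structures of $A$ to extract a matrix $D\in\mathbb{B}^{p\times p}$ with $B_2 = D\otimes C_2$ (the paper's $A_2 = P\otimes \hat{A}_2$). Your version merely spells out the index bookkeeping that the paper leaves implicit in the phrase ``the blockwise equality ensures that there exists a matrix $P$''.
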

\begin{proof}
For the forward implication, assuming there exists two factorizations $(l,pr)$ and $(pl,r)$, then
\begin{align}
    A &= A_1 \otimes A_2, \label{eq: fact_1} \\
    A &= \hat{A}_1 \otimes \hat{A}_2. \label{eq: fact_2}
\end{align}
If $A=0$, the statement trivially holds. If $A \neq 0$, we deduce from \eqref{eq: fact_1} that $A$ is a $l \times l$ block matrix with blocks of size $pr \times pr$ that are either zero or copies of $A_2$. But since $pr$ is a multiple of $r$, the (blockwise) equality of \eqref{eq: fact_1} and \eqref{eq: fact_2} ensures that there exists a matrix $P \in \mathbb{B}^{p \times p}$ such that $A_2 = P \otimes \hat{A}_2$ and consequently, from \eqref{eq: fact_1},
\begin{equation*}
    A = A_1 \otimes A_2 = A_1 \otimes P \otimes \hat{A}_2
\end{equation*}
and there exists an $(l,p,r)$ factorization.

The backward implication immediately follows from the associativity of the Kronecker product: assuming there exists an $(l,p,r)$ factorization, then
\begin{equation*}
    A = A_1 \otimes A_2 \otimes A_3 = A_1 \otimes (A_2 \otimes A_3) = (A_1 \otimes A_2) \otimes A_3
\end{equation*}
and there exist $(l,pr)$ and $(pl,r)$ factorizations.
\end{proof}

\begin{remark}
The statement of \Cref{lem: 2fact_2_to_1fact_3} also holds for real or complex matrices with essentially the same proof arguments.     
\end{remark}

In short, \Cref{lem: 2fact_2_to_1fact_3} shows how to combine two length $2$ factorizations to produce a single length $3$ factorization. More generally, we may combine two factorizations of length $q+1$ and $s+1$ to produce a single factorization of length $q+s+1$. The result is stated in the next corollary, where for a vector $\bm{n}=(n_1,\dots,n_d) \in \mathbb{N}^d$, $\Pi(\bm{n})=\prod_{i=1}^d n_i$.

\begin{corollary}
\label{cor: cor_fact1}
For an integer $p>1$ and integer vectors $\bm{l}=(l_1,\dots,l_q) \in \mathbb{N}^q$, $\bm{r}=(r_1,\dots,r_s) \in \mathbb{N}^s$ all greater than $1$ componentwise, the existence of the following factorizations is equivalent
\begin{equation*}
    \exists
    \begin{cases}
        (\bm{l},p\Pi(\bm{r})) \\
        (p\Pi(\bm{l}),\bm{r})
    \end{cases}
    \iff \exists \; (\bm{l},p,\bm{r}).
\end{equation*}
\end{corollary}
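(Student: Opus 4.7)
My plan is to reduce the corollary to the already-established length-three case (Lemma 3.8) by aggregating the multi-index factorizations into single factors via associativity, and then to unpack the result back into the desired componentwise form using the uniqueness result (Lemma 2.5).

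The backward implication is immediate from associativity of the Kronecker product: if $A = A_1 \otimes \cdots \otimes A_q \otimes P \otimes C_1 \otimes \cdots \otimes C_s$ is an $(\bm{l}, p, \bm{r})$ factorization, then grouping the first $q+1$ factors yields a $(p\Pi(\bm{l}), \bm{r})$ factorization, while grouping the last $s+1$ factors yields an $(\bm{l}, p\Pi(\bm{r}))$ factorization. So the bulk of the work lies in the forward direction.

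For the forward direction, set $L = \Pi(\bm{l})$ and $R = \Pi(\bm{r})$, and assume the $(\bm{l}, p\Pi(\bm{r}))$ and $(p\Pi(\bm{l}), \bm{r})$ factorizations
\begin{equation*}
A = A_1 \otimes \cdots \otimes A_q \otimes M, \qquad A = B \otimes C_1 \otimes \cdots \otimes C_s
\end{equation*}
exist. The case $A = 0$ is vacuous, so we may assume $A \neq 0$. Setting $X := A_1 \otimes \cdots \otimes A_q$ and $Y := C_1 \otimes \cdots \otimes C_s$, associativity shows that $A = X \otimes M$ is an $(L, pR)$ factorization and $A = B \otimes Y$ is a $(pL, R)$ factorization. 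Applying Lemma 3.8 with $l=L$, $r=R$, and the middle integer $p$, we obtain a length-three factorization $A = \tilde{X} \otimes P \otimes \tilde{Y}$ with sizes $(L, p, R)$.

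The final step is to identify $\tilde{X}$ with $X$ and $\tilde{Y}$ with $Y$. Regrouping, $A = \tilde{X} \otimes (P \otimes \tilde{Y})$ is another $(L, pR)$ factorization of $A$, so Lemma 2.5 forces $\tilde{X} = X$ and $P \otimes \tilde{Y} = M$; symmetrically, $A = (\tilde{X} \otimes P) \otimes \tilde{Y}$ viewed as a $(pL, R)$ factorization gives $\tilde{Y} = Y$ by the same uniqueness lemma. Substituting the componentwise decompositions of $X$ and $Y$ back in yields
\begin{equation*}
A = A_1 \otimes \cdots \otimes A_q \otimes P \otimes C_1 \otimes \cdots \otimes C_s,
\end{equation*}
which is the desired $(\bm{l}, p, \bm{r})$ factorization. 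The only delicate point is being careful that Lemma 2.5 requires $A \neq 0$, which we have already isolated as a trivial subcase; everything else is an entirely mechanical application of associativity and the previously proven length-three result.
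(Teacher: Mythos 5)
Your proof is correct and follows essentially the same route as the paper: both reduce the forward implication to the length-three case by aggregating the multi-index factors via associativity into $X=\bigotimes_i A_i$ and $Y=\bigotimes_i C_i$. The only (harmless) difference is that the paper re-runs the argument of \Cref{lem: 2fact_2_to_1fact_3} directly with these specific grouped matrices, which yields $M = P \otimes Y$ without further identification, whereas you invoke the lemma as a black box and then patch the identification $\tilde{X}=X$, $\tilde{Y}=Y$ via the uniqueness result of \Cref{lem: uniqueness} --- a slightly more modular but equally valid path, with the $A \neq 0$ hypothesis correctly isolated.
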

\begin{proof}
If $A$ admits two factorizations $(\bm{l},p\Pi(\bm{r}))$ and $(p\Pi(\bm{l}),\bm{r})$, then
\begin{equation*}
    A = \left(\bigotimes_{i=1}^{q} L_i \right) \otimes R = L \otimes \left(\bigotimes_{i=1}^{s} R_i \right)
\end{equation*}
where the factor matrices $L_i$ and $R_i$ have size $l_i$ and $r_i$, respectively. The proof then follows exactly the same lines as in \Cref{lem: 2fact_2_to_1fact_3} with $A_1 = \bigotimes_{i=1}^{s} L_i$, $A_2 = R$, $\hat{A}_1 = L$ and $\hat{A}_2 = \bigotimes_{i=1}^{s} R_i$.
\end{proof}

The next corollary is the workhorse of our method and allows to combine $m$ factorizations of length $2$ to produce a single factorization of length $m+1$.

\begin{corollary}
\label{cor: cor_fact2}
For integers $l,p_i,r>1$ for $i=1,\dots,q$, the existence of the following factorizations is equivalent
\begin{equation*}
    \exists
    \begin{cases}
        (l, p_1p_2\dots p_q r) \\
        (p_1l, p_2\dots p_q r) \\
        \vdots \\
        (p_1 p_2 \dots p_q l, r)
    \end{cases}
    \iff \exists \; (l,p_1,p_2,\dots,p_q,r).
\end{equation*}
\end{corollary}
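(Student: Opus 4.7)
The plan is to prove the forward implication by induction on $q \geq 1$; the backward implication is immediate from associativity of the Kronecker product, since grouping consecutive factors of a length $q+2$ factorization $(l,p_1,\ldots,p_q,r)$ reproduces each of the $q+1$ listed length-$2$ factorizations.

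The base case $q=1$ is nothing but \Cref{lem: 2fact_2_to_1fact_3} after the relabelling $p_1 \to p$, so no new argument is required.

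For the inductive step, I assume the corollary holds for $q-1$ and suppose all $q+1$ length-$2$ factorizations on the left-hand side exist. I first discard the top entry $(l, p_1 p_2 \cdots p_q r)$ and look at the remaining $q$ factorizations $(p_1 l,\, p_2 \cdots p_q r),\; (p_1 p_2 l,\, p_3 \cdots p_q r),\; \ldots,\; (p_1 \cdots p_q l,\, r)$. Under the substitution $l \to p_1 l$, $p_i \to p_{i+1}$ for $i = 1, \ldots, q-1$, and $r \to r$, this list matches verbatim the hypothesis of the corollary at level $q-1$. The inductive hypothesis then yields a length $q+1$ factorization of $A$ with sizes $(p_1 l, p_2, p_3, \ldots, p_q, r)$.

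I then invoke \Cref{cor: cor_fact1} with the length-$1$ vector $\bm{l} = (l)$, scalar $p = p_1$, and vector $\bm{r} = (p_2, p_3, \ldots, p_q, r)$. Its two premises are met: $(\bm{l}, p\,\Pi(\bm{r})) = (l, p_1 p_2 \cdots p_q r)$ is the first length-$2$ factorization in the original list, and $(p\,\Pi(\bm{l}), \bm{r}) = (p_1 l, p_2, \ldots, p_q, r)$ is precisely the length $q+1$ factorization just produced. The conclusion of \Cref{cor: cor_fact1} is then the desired $(l, p_1, p_2, \ldots, p_q, r)$ factorization, closing the induction. The only delicate point is the index bookkeeping when reindexing to apply the inductive hypothesis; once the substitution is explicit, no further ideas are needed beyond the machinery of \Cref{lem: 2fact_2_to_1fact_3} and \Cref{cor: cor_fact1}.
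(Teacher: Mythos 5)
Your proof is correct and rests on the same key ingredient as the paper's: repeated application of \Cref{cor: cor_fact1} to merge the listed length-$2$ factorizations into one long factorization, with the backward direction dispatched by associativity. The only difference is organizational — the paper iterates explicitly from the top of the list, growing the left prefix $(l,p_1,\dots)$ with a scalar right block, whereas you phrase it as an induction on $q$ that first resolves the tail into $(p_1 l, p_2,\dots,p_q,r)$ and then splits off $p_1$ with a length-one $\bm{l}$; these are mirror images of the same argument and both are valid.
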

\begin{proof}
The proof consists in combining factorizations to gradually increase the length of the product. Considering the first two factorizations of the list and invoking \Cref{cor: cor_fact1} (with $\bm{l}=l$, $p=p_1$ and $\bm{r}=p_2\dots p_q r$), we obtain
\begin{equation*}
    \begin{cases}
        (l, p_1p_2\dots p_q r) \\
        (p_1l, p_2\dots p_q r)
    \end{cases}
    \implies \exists \; (l,p_1,p_2 \dots p_q r).
\end{equation*}
Now combining the result with the third factorization and invoking \Cref{cor: cor_fact1} (with $\bm{l}=(l,p_1)$, $p=p_2$ and $\bm{r}=p_3\dots p_q r$) yields
\begin{equation*}
    \begin{cases}
        (l, p_1, p_2\dots p_q r) \\
        (p_1p_2l, p_3\dots p_q r)
    \end{cases}
    \implies \exists \; (l,p_1,p_2,p_3 \dots p_q r).
\end{equation*}
We continue the process by repeatedly combining the result with the next factorization in the list until we end up with $(l,p_1,p_2,\dots,p_q,r)$. The proof for the backward implication again trivially follows from the associativity of the Kronecker product.
\end{proof}

Based on \Cref{cor: cor_fact2}, we may restrict our attention to length $2$ factorizations to produce factorizations of greater length. \Cref{cor: cor_fact2} also immediately leads to an algorithm where length $2$ factorizations that are related (through the integers $p_i$) are grouped in a single \emph{branch}. In practice, we might end up with multiple branches and a given length $2$ factorization might belong to different branches. The algorithm for constructing all different branches is fairly simple. Starting from the set of all possible length $2$ factorizations
\begin{equation*}
    \dutchcal{F} = \{(l_1,r_1),\dots,(l_s,r_s)\}
\end{equation*}
for some integer $s$, let $\dutchcal{L} = \{l_1,\dots,l_s\}$ and $\dutchcal{R} = \{r_1,\dots,r_s\}$ denote the set of left and right indices, respectively. Without loss of generality, we may assume that $\dutchcal{L}$ is sorted such that $l_1 < l_2 < \dots < l_s$ and the right indices are paired accordingly. Hereafter, we present an algorithm for computing the various branches. Given a set $\dutchcal{X} \subset \mathbb{N}$, we denote $\overline{\dutchcal{X}}$ the reduced set obtained by eliminating all elements that are multiples of other elements in the set. For instance, if $\dutchcal{X}=\{2,3,4,6\}$, then $\overline{\dutchcal{X}}=\{2,3\}$. 

For building the various branches, let $\dutchcal{M}_0=\dutchcal{L}$ and choose an integer $\lcal_0 \in \overline{\dutchcal{M}}_0$. Then, form the set $\dutchcal{M}_1 \subseteq \dutchcal{L}$ of multiples of $\lcal_0$ and choose an integer $\lcal_1 \in \overline{\dutchcal{M}}_1$. Repeat the process until finding an integer $\lcal_q$ that does not have any multiples in $\dutchcal{L}$. The integers $\{\lcal_0,\lcal_1,\dots,\lcal_q\} \subseteq \dutchcal{L}$ are then paired to $\{\rcal_0,\rcal_1,\dots,\rcal_q\} \subseteq \dutchcal{R}$ such that $\lcal_k\rcal_k=n$ for all $k$. Together, they form the left and right indices, respectively, of \emph{one} possible branch. By construction, those indices are all consecutive multiples and there exist integers $\{p_1,\dots,p_q\}$ such that $\lcal_k=p_k \lcal_{k-1}$ for $k=1,\dots,q$ (and similarly for the right indices). By keeping track of all possible choices at each stage of the process, we may construct all possible branches. Assuming that $m$ branches have been constructed and indexing the $k$th branch with the superscript $k$, we eventually obtain
\begin{equation*}
    \begin{cases}
        (\lcal_0^k,\rcal_0^k) =(\lcal_0^k, p_1^kp_2^k\dots p_{q_k}^k \rcal_{q_k}^k) \\
        (\lcal_1^k,\rcal_1^k) =(p_1^k\lcal_0^k, p_2^k\dots p_{q_k}^k \rcal_{q_k}^k) \\
        \vdots \\
        (\lcal_{q_k}^k,\rcal_{q_k}^k) =(p_1^k p_2^k \dots p_{q_k}^k \lcal_0^k, \rcal_{q_k}^k)
    \end{cases}
    \qquad k=1,\dots,m.
\end{equation*}
By convention, we assume that $q_k=0$ corresponds to a branch with the single factorization $(\lcal_0^k,\rcal_0^k)$. Obviously, an analogous construction holds for the right indices and choosing one or the other is just a matter of taste. We will later present a convenient way of visualizing the factorizations resulting from different branches but at this stage an example is opportune. 

\begin{example}
Let us return to \Cref{ex: non_max_mat_12}. Forming the matrix $A$ and computing all length $2$ factorizations, we find 
\begin{equation*}
    \dutchcal{L} = \{2,3,4\}.
\end{equation*}
Following the procedure described earlier we set $\dutchcal{M}_0 = \dutchcal{L}$ and find $\overline{\dutchcal{M}}_0=\{2,3\}$. Thus, there are two possibilities for choosing $\lcal_0$:
\begin{itemize}
    \item If $\lcal_0=2$, $\dutchcal{M}_1=\overline{\dutchcal{M}}_1=\{4\}$ and we must necessarily choose $\lcal_1=4$.
    \item If $\lcal_0=3$ the algorithm ends here since $\dutchcal{L}$ does not contain any multiples of $3$.
\end{itemize}
Consequently, we only have two branches from which we deduce two factorizations
\begin{align*}
\begin{cases}
    (2,6) \\
    (4,3) \\
\end{cases}
&\implies (2,2,3), &
\begin{cases}
    (3,4). \\
\end{cases}
\end{align*}
Those are precisely the two factorizations listed in \Cref{ex: non_max_mat_12}.
\end{example}

The only remaining theoretical question is wether the factorizations resulting from different branches are prime. The next lemma provides a first answer.

\begin{lemma}
\label{lem: primality_1}
For a matrix $A \in \mathbb{B}^{n \times n}$, the right (resp. left) factor of an $(n_1,n_2)$ factorization is decomposable if and only if there exists an $(\hat{n}_1,\hat{n}_2)$ factorization where $\hat{n}_2$ divides $n_2$ (resp. $\hat{n}_1$ divides $n_1$). 
\end{lemma}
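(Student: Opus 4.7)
The plan is to prove both directions using associativity of the Kronecker product, with the backward direction leveraging \Cref{lem: 2fact_2_to_1fact_3} together with uniqueness (\Cref{lem: uniqueness}). Since the two statements (right and left factor) are completely symmetric, I would only write out the right-factor case. Note that the condition ``$\hat{n}_2$ divides $n_2$'' must be understood in the proper-divisor sense $\hat{n}_2<n_2$: otherwise $\hat{n}_2=n_2$ forces $\hat{n}_1=n_1$ and by uniqueness returns the same factorization, carrying no information about decomposability of $A_2$.

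Forward direction: Write the given factorization as $A=A_1\otimes A_2$. If $A_2$ is decomposable, there exist $B_1\in\mathbb{B}^{m_1\times m_1}$ and $B_2\in\mathbb{B}^{m_2\times m_2}$ with $m_1,m_2>1$ and $m_1m_2=n_2$ such that $A_2=B_1\otimes B_2$. Associativity then yields $A=(A_1\otimes B_1)\otimes B_2$, which is a $(\hat{n}_1,\hat{n}_2)$ factorization with $\hat{n}_1=n_1m_1$ and $\hat{n}_2=m_2$. Clearly $\hat{n}_2=m_2$ is a proper divisor of $n_2=m_1m_2$.

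Backward direction: Suppose $A$ admits a $(\hat{n}_1,\hat{n}_2)$ factorization with $\hat{n}_2$ a proper divisor of $n_2$. Set $k=n_2/\hat{n}_2>1$, so from $n_1n_2=\hat{n}_1\hat{n}_2=n$ we obtain $\hat{n}_1=kn_1$. Then $A$ admits both a $(n_1,k\hat{n}_2)$ factorization (namely the original $(n_1,n_2)$ one) and a $(kn_1,\hat{n}_2)$ factorization, which is exactly the setup of \Cref{lem: 2fact_2_to_1fact_3} with $l=n_1$, $p=k$, $r=\hat{n}_2$ (all three being $>1$). That lemma delivers an $(n_1,k,\hat{n}_2)$ factorization $A=A_1'\otimes B_1\otimes B_2$. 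Grouping the last two factors via associativity gives a $(n_1,n_2)$ factorization $A=A_1'\otimes(B_1\otimes B_2)$; uniqueness (\Cref{lem: uniqueness}) then forces $A_2=B_1\otimes B_2$, so $A_2$ is decomposable because $k,\hat{n}_2>1$.

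The main subtlety is clarifying the proper-divisor reading of the hypothesis, which is exactly what guarantees $k>1$ and thus the applicability of \Cref{lem: 2fact_2_to_1fact_3}. The only other minor point is the degenerate case $A=0$, where \Cref{lem: uniqueness} does not literally apply, but the claim is vacuous since $A_2$ is the zero matrix and trivially admits a factorization for any compatible pair. The left-factor case follows by the same argument applied to the mirror statement of \Cref{lem: 2fact_2_to_1fact_3}.
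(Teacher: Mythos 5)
Your proof is correct and follows essentially the same route as the paper's: the forward direction regroups factors via associativity, and the backward direction invokes \Cref{lem: 2fact_2_to_1fact_3} to obtain an $(n_1,k,\hat{n}_2)$ factorization and then \Cref{lem: uniqueness} to identify $A_2$ with the product of the last two factors. Your explicit observation that ``divides'' must be read as ``properly divides'' (so that $k>1$ and the cited lemma applies) is a useful clarification that the paper leaves implicit.
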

\begin{proof}
If the right factor $A_2$ of the $(n_1,n_2)$ factorization is decomposable, then there exist matrices $P$ and $\hat{A}_2$ such that $A_2 = P \otimes \hat{A}_2$. Thus,
\begin{equation*}
    A_1 \otimes A_2 = (A_1 \otimes P) \otimes \hat{A}_2
\end{equation*}
and there exists an $(\hat{n}_1,\hat{n}_2)$ factorization where $\hat{n}_2$ divides $n_2$. 

Now assume there exists an $(\hat{n}_1,\hat{n}_2)$ factorization where $\hat{n}_2$ divides $n_2$. Then, there exists an integer $p$ such that $n_2=p\hat{n}_2$ and therefore $\hat{n}_1=pn_1$. But then, by \Cref{lem: 2fact_2_to_1fact_3},
\begin{equation*}
    \begin{cases}
        (n_1,p\hat{n}_2) \\
        (pn_1,\hat{n}_2)
    \end{cases}
    \implies \exists \; (n_1,p,\hat{n}_2)
\end{equation*}
such that $A=A_1 \otimes P \otimes \hat{A}_2$ and from the uniqueness of the factorization (\Cref{lem: uniqueness}) we deduce that $A_2=P \otimes \hat{A}_2$ and $A_2$ is decomposable. The proof for the left factor follows similar arguments.
\end{proof}

Assume we have constructed a branch 
\begin{equation*}
    \begin{cases}
        (l, p_1p_2\dots p_q r) \\
        (p_1l, p_2\dots p_q r) \\
        \vdots \\
        (p_1 p_2 \dots p_q l, r)
    \end{cases}
\end{equation*}
according to the procedure described earlier. We will now show that the $(l,p_1,p_2,\dots,p_q,r)$ factorization obtained by combining all length $2$ factorizations within the branch is prime provided $\dutchcal{F}$ lists \emph{all} possible length $2$ factorizations.

\begin{corollary}
\label{cor: primality_2}
If $\dutchcal{F}$ contains all possible length $2$ factorizations, then the factorizations constructed from each separate branch are prime.    
\end{corollary}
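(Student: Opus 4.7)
The plan is to prove the statement by contradiction. Suppose a branch produces the combined factorization $A = A_0 \otimes B_1 \otimes \cdots \otimes B_q \otimes C_q$ of length $q+2$ with factor sizes $(\lcal_0, p_1, \ldots, p_q, \rcal_q)$, and assume that some factor matrix is not prime, so it splits as $P \otimes Q$ with sizes $|P|, |Q| > 1$. Since the Kronecker product is associative, regrouping the other factors around this finer splitting yields a length-$2$ factorization of $A$; by the completeness hypothesis on $\dutchcal{F}$, its left size must therefore belong to $\dutchcal{L}$. I would then aim to contradict the way the branch was built by showing that this new left size is incompatible with the selections $\lcal_0 \in \overline{\dutchcal{M}}_0$, $\lcal_i \in \overline{\dutchcal{M}}_i$, or the terminating condition on $\lcal_q$, depending on whether the offending factor is $A_0$, an interior $B_i$, or $C_q$.

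I would then handle the three positional cases one by one. If $A_0 = P \otimes Q$, then $A = P \otimes (Q \otimes B_1 \otimes \cdots \otimes C_q)$ forces $|P| \in \dutchcal{L}$, yet $|P|$ is a proper divisor of $\lcal_0$, contradicting $\lcal_0 \in \overline{\dutchcal{L}} = \overline{\dutchcal{M}}_0$. If an interior $B_i$ splits as $P \otimes Q$ for some $1 \le i \le q$, regrouping produces a length-$2$ factorization with left size $\lcal_{i-1}|P|$, an integer satisfying $\lcal_{i-1} < \lcal_{i-1}|P| < \lcal_{i-1}p_i = \lcal_i$; hence $\lcal_{i-1}|P| \in \dutchcal{M}_i$ while $\lcal_i$ is a proper multiple of it, contradicting $\lcal_i \in \overline{\dutchcal{M}}_i$. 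Finally, if $C_q = P \otimes Q$, then regrouping gives a length-$2$ factorization of left size $\lcal_q|P|$, which is a proper multiple of $\lcal_q$ lying in $\dutchcal{L}$ and thereby violates the terminating condition that $\lcal_q$ has no multiples in $\dutchcal{L}$.

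The chief obstacle is the interior case, where one must check two ingredients simultaneously: the length-$2$ factorization obtained by associative regrouping is indeed in $\dutchcal{F}$ (this is where the completeness of $\dutchcal{F}$ is used essentially), and its left size genuinely lies strictly between two consecutive $\lcal$ values of the branch so as to trigger the reduction-set rule. Once the three cases are settled, every factor $A_0, B_1, \ldots, B_q, C_q$ must be prime, so the branch's combined factorization is prime by definition.
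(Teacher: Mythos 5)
Your proof is correct and follows essentially the same route as the paper: assume a factor splits, regroup by associativity to obtain a length-$2$ factorization that completeness of $\dutchcal{F}$ forces into $\dutchcal{L}$, and derive a contradiction with the branch construction in each of the three positional cases (proper divisor of $\lcal_0$, element of $\dutchcal{M}_i$ properly dividing $\lcal_i$, proper multiple of $\lcal_q$). The only cosmetic difference is that the paper dispatches the leftmost case by citing \Cref{lem: primality_1} rather than regrouping directly.
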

\begin{proof}
We will first prove that the leftmost and rightmost factors of the $(l,p_1,p_2,\dots,p_q,r)$ factorization are prime. Indeed, by construction, $l$ does not have any divisor in $\dutchcal{L}$ so by \Cref{lem: primality_1}, the leftmost factor is prime. Moreover, assume by contradiction that the rightmost factor is decomposable such that $r=\tilde{p}\tilde{r}$. But then $\dutchcal{F}$ must contain the $(\tilde{p}p_1 p_2 \dots p_q l, \tilde{r})$ factorization and consequently there would exist a multiple of $p_1 p_2 \dots p_q l$ in $\dutchcal{L}$. This contradicts the termination criterion for constructing a branch and consequently the rightmost factor must also be prime. To prove that all the inner factors of the decomposition are also prime, assume by contradiction that one of the inner factors, say $P_i$, is decomposable such that $p_i=\tilde{l}_i\tilde{r}_{i}$. Therefore, $\dutchcal{F}$ must contain the $(p_1p_2\dots p_{i -1}\tilde{l}_il, \tilde{r}_{i}p_{i+1}\dots p_q r)$ factorization. But then there would exist a multiple of $\ell_{i-1}=p_1p_2\dots p_{i-1} l$ in $\dutchcal{L}$ that divides $\ell_i=p_1p_2\dots p_i l$. Once again, this clashes with the construction of a branch since any left index $\ell$ that is a multiple of $\ell_{i-1}$ cannot be a divisor of $\ell_i$. Thus, we conclude that all internal factors are also prime.
\end{proof}

\Cref{cor: primality_2} proves that distinct branches lead to distinct prime factorizations and remarkably, we may reconstruct those factorizations from the enumeration of all length $2$ factorizations. However, the primality guarantee is lost if $\dutchcal{F}$ only \emph{partially} lists length $2$ factorizations. This does not prevent us from applying the algorithm but the factor matrices in the resulting decompositions may still be decomposable. Also bear in mind that even if a factorization is prime, the sizes of the factor matrices in the decomposition are generally not prime numbers (see \Cref{ex: non_max_mat_12}). Nevertheless, maximal matrices are again special in this regard and their properties are summarized in the next theorem.

\begin{theorem}
\label{th: maximal_mat}
Let $A \in \mathbb{B}^{n \times n}$ be a decomposable maximal matrix and let $n = \prod_{j=1}^m p_j^{k_j}$ denote the prime decomposition of $n$. Then, for an $(n_1,\dots,n_\ell)$ factorization resulting from a given branch,
\begin{enumerate}
    \item $n_i \in \{p_1,\dots,p_m\}$ for all $i=1,\dots,\ell$.
    \item $\ell = \sum_{j=1}^m k_j$.
    \item The number of branches is $\frac{\ell !}{\prod_{j=1}^m k_j!}$.
\end{enumerate}
\end{theorem}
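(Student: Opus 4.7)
The plan is to exploit that the maximality of $A$ forces $\dutchcal{F} = \dutchcal{C}$, and hence $\dutchcal{L} = [n] \setminus \{1, n\}$: every non-trivial divisor of $n$ appears as a left index in some length-$2$ factorization. Properties (1) and (2) then follow by showing that each factor in a branch-generated decomposition is a prime integer, while (3) follows from a bijection between branches and orderings of the multiset of prime divisors of $n$.

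For (1) and (2), I would consider a branch producing the factorization $(\lcal_0, p_1, \ldots, p_q, r)$, where $\lcal_k = p_k \lcal_{k-1}$ for $k = 1, \ldots, q$ and $r = n/\lcal_q$, and argue separately that $\lcal_0$, each $p_k$, and $r$ are prime. First, $\lcal_0 \in \overline{\dutchcal{L}}$; since $\dutchcal{L}$ contains every non-trivial divisor of $n$, its minimal elements under divisibility are precisely the prime divisors $\{p_1,\ldots,p_m\}$, so $\lcal_0$ is prime. Next, if some ratio $p_k$ were composite, say $p_k = ab$ with $a, b > 1$, then $\lcal_{k-1} a$ would be a non-trivial divisor of $n$ lying strictly between $\lcal_{k-1}$ and $\lcal_k$; by maximality $\lcal_{k-1} a \in \dutchcal{L}$, so $\lcal_{k-1} a \in \dutchcal{M}_k$, properly divides $\lcal_k$, and contradicts $\lcal_k \in \overline{\dutchcal{M}}_k$. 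Finally, if $r = cd$ with $c, d > 1$, then $\lcal_q c$ would be a non-trivial divisor of $n$ and a proper multiple of $\lcal_q$, contradicting the termination criterion that $\lcal_q$ has no multiples in $\dutchcal{L}$. Hence every factor $n_i$ is prime, and since $\prod_{i=1}^\ell n_i = n = \prod_{j=1}^m p_j^{k_j}$, uniqueness of prime factorization delivers statement (1) and $\ell = \sum_{j=1}^m k_j$.

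For (3), I would construct a bijection between branches and orderings of the multiset containing $k_j$ copies of $p_j$ for $j = 1, \ldots, m$. In one direction, each branch $(\lcal_0, \lcal_1, \ldots, \lcal_q)$ yields the ordered factorization $(\lcal_0, p_1, \ldots, p_q, r)$, which by (1) is such an ordering. Conversely, given any ordering $(q_1, \ldots, q_\ell)$ of the prime multiset, set $\lcal_0 = q_1$ and $\lcal_k = q_1 q_2 \cdots q_{k+1}$ for $k = 1, \ldots, \ell - 2$. Each $\lcal_k$ is a non-trivial divisor of $n$, hence in $\dutchcal{L}$; it lies in $\overline{\dutchcal{M}}_k$ because any proper multiple of $\lcal_{k-1}$ that properly divides $\lcal_k$ must have the form $\lcal_{k-1} t$ with $1 < t < q_{k+1}$ and $t \mid q_{k+1}$, which is impossible since $q_{k+1}$ is prime; the branch terminates at $\lcal_{\ell-2}$ because $n/\lcal_{\ell-2} = q_\ell$ is prime, leaving no proper multiple of $\lcal_{\ell-2}$ in $\dutchcal{L}$. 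These two maps are mutual inverses, so the number of branches equals the number of orderings of the multiset, namely $\ell! / \prod_{j=1}^m k_j!$.

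The main obstacle is the careful handling of the ``reduced set'' requirement on each intermediate $\lcal_k$: both when ruling out composite ratios in the forward direction and when verifying in the reverse direction that the partial products from an ordering satisfy both the minimality ($\lcal_k \in \overline{\dutchcal{M}}_k$) and termination conditions of the branch algorithm. Both arguments rest on the primality of the intermediate jumps together with the maximality of $A$, which guarantees that every non-trivial divisor of $n$ is available as a candidate left index.
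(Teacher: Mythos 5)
Your proposal is correct and follows essentially the same route as the paper: maximality gives $\dutchcal{F}=\dutchcal{C}$, so $\dutchcal{L}$ contains every non-trivial divisor of $n$, the branch construction then forces every factor size to be prime (giving the length $\sum_j k_j$), and the branches are counted as permutations of the multiset of prime factors. The only difference is one of detail: where the paper simply asserts that the algorithmic construction yields a sequence of primes and that branches correspond to multiset permutations, you explicitly verify the primality of $\lcal_0$, of each ratio $p_k$, and of $r$ via the reduced-set and termination conditions, and you exhibit the bijection in both directions.
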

\begin{proof}
Since $A$ is maximal, $\dutchcal{F}=\dutchcal{C}$ and $\dutchcal{L}$ contains all non-trivial divisors of $n$. Thus,
\begin{equation*}
    \dutchcal{L} = \left\{\prod_{j=1}^m p_j^{q_j} \colon 0 < \sum_{j=1}^m q_j < \sum_{j=1}^m k_j,\ 0 \leq q_j \leq k_j,\ j=1,\dots,m \right\}.
\end{equation*}
From our algorithmic construction, the sizes $n_1,\dots,n_\ell$ resulting from any branch form a sequence of prime numbers, where each $p_j$ appears $k_j$ times among $n_1,\dots,n_\ell$. Consequently, the length of any such factorization is
\begin{equation*}
    \ell = \sum_{j=1}^m k_j.
\end{equation*} 
Moreover, the number of distinct sequences (or branches) is the number of permutations of $\ell$ objects, among which $k_1$ are indistinguishable, $k_2$ are indistinguishable and so on. From elementary statistics, this number is
\begin{equation*}
    \frac{\ell !}{\prod_{j=1}^m k_j!}.
\end{equation*}
\end{proof}

\Cref{th: maximal_mat} also implies that for a general $A \in \mathbb{B}^{n \times n}$, the length of any factorization cannot exceed $\sum_{j=1}^m k_j$, where the $k_j$ are the exponents in the prime decomposition of $n$. This result merely confirms what could be straightforwardly inferred without setting up any heavy machinery. The results of \Cref{th: maximal_mat} are easily verified on \Cref{ex: max_mat_12}. For completeness, we present another more descriptive example by applying the algorithm outlined earlier.

\begin{example}
\label{ex: max_mat_24}
Let
\begin{equation*}
    \dutchcal{L}=\{2,3,4,6,8,12\}
\end{equation*}
be the (sorted) left indices of all possible length $2$ factorizations of a maximal matrix of size $24=2^3 \cdot 3$. Thus, $m=2$ with prime numbers $p_1=2$, $p_2=3$ and integers $k_1=3$, $k_2=1$. Let us first apply our algorithm while ignoring the fact that $A$ is maximal. As explained earlier we initialize $\dutchcal{M}_0=\dutchcal{L}$ and choose an integer in $\overline{\dutchcal{M}}_0=\{2,3\}$. The integers in $\overline{\dutchcal{M}}_0$ are the roots of any factorization we may form. Keeping track of the various choices at each stage of the process exhibits the various branches, leading to four distinct prime factorizations.
\begin{itemize}
    \item If $\lcal_0=2$, $\dutchcal{M}_1 = \{4,6,8,12\}$ and $\overline{\dutchcal{M}}_1 = \{4,6\}$
    \begin{itemize}
        \item If $\lcal_1=4$, $\dutchcal{M}_2=\{8,12\}$ and $\overline{\dutchcal{M}}_2=\{8,12\}$.
        \begin{itemize}
            \item If $\lcal_2=8$ the branch ends since $\dutchcal{L}$ does not contain any multiple of $8$.
            \item If $\lcal_2=12$ the branch ends since $\dutchcal{L}$ does not contain any multiple of $12$.
        \end{itemize}
        \item If $\lcal_1=6$, $\dutchcal{M}_2=\{12\}$ and $\overline{\dutchcal{M}}_2=\{12\}$. Consequently, $\lcal_2=12$ and the branch ends.
    \end{itemize}
    \item If $\lcal_0=3$, $\dutchcal{M}_1 = \{6,12\}$, $\overline{\dutchcal{M}}_1 = \{6\}$ and the only one possible path is $\lcal_1=6$ and $\lcal_2=12$.
\end{itemize}

We have therefore found four different branches
\begin{align*}
\begin{cases}
    (2,12) \\
    (4,6) \\
    (8,3)\\
\end{cases}
&\implies (2,2,2,3), &
\begin{cases}
    (2,12) \\
    (4,6) \\
    (12,2)\\
\end{cases}
&\implies (2,2,3,2), \\
\begin{cases}
    (2,12) \\
    (6,4) \\
    (12,2)\\
\end{cases}
&\implies (2,3,2,2), &
\begin{cases}
    (3,8) \\
    (6,4) \\
    (12,2)\\
\end{cases}
&\implies (3,2,2,2).    
\end{align*}
In agreement with \Cref{th: maximal_mat}, the sizes of the factor matrices in each factorization are prime numbers, the length of each factorization is $\ell = k_1 + k_2 = 4$ and the number of factorizations is $\ell!/(k_1!k_2!)=4$.
\end{example}

However, it is worthwhile noting that the Kronecker product of two maximal matrices is generally not maximal, as shown in the next example.

\begin{example}
\label{ex: mat_36}
Consider the maximal matrix
\begin{align*}
A 
&=
\begin{pmatrix}
1 & 0 \\
1 & 0
\end{pmatrix}
\otimes
\begin{pmatrix}
1 & 0 & 1 \\
1 & 0 & 1 \\
1 & 0 & 1
\end{pmatrix} \\
&=
\begin{pmatrix}
1 & 1 & 0 \\
1 & 1 & 0 \\
1 & 1 & 0
\end{pmatrix}
\otimes
\begin{pmatrix}
1 & 0 \\
1 & 0
\end{pmatrix}.
\end{align*}
Taking the Kronecker product of $A$ with itself, we obtain a matrix of size $36=2^2 \cdot 3^2$. However, it only admits $5$ prime factorizations, which is smaller than the $6$ factorizations it could potentially have. Indeed, the $(2,2,3,3)$ factorization is missing because $4$ does not belong to the set of left indices.
\end{example}

In addition to the sizes of the factor matrices, one may also be interested in their sparsity pattern. Fortunately, once the sizes of the factor matrices in a decomposition are known, one may also easily retrieve their sparsity pattern as a simple post-processing step. This operation only requires labeling the length $2$ factorizations constituting a branch and storing the sparsity pattern of their factors. Note that the sparsity pattern of length $2$ factorizations is an immediate byproduct of the factorizability check and does not entail any extra cost. The sparsity pattern of factorizations of length $\ell > 2$ is then deduced one factor at a time by combining the sparsity pattern of consecutive length $2$ factorizations. Indeed, the length $2$ factorizations entering a branch are obtained by simply regrouping an increasingly large number of left factors together. More specifically, the length $2$ factorizations constituting the branch of an arbitrary $(l,p_1,p_2,\dots,p_q,r)$ factorization $L \otimes P_1 \otimes P_2 \otimes \dots \otimes P_q \otimes R$ correspond to the following products:
\begin{equation*}
    \begin{cases}
        (l, p_1p_2\dots p_q r) \qquad L \otimes (P_1 \otimes P_2 \otimes \dots \otimes P_q \otimes R),\\
        (p_1l, p_2\dots p_q r) \qquad (L \otimes P_1) \otimes (P_2 \otimes \dots \otimes P_q \otimes R),\\
        \vdots \\
        (p_1 p_2 \dots p_q l, r) \qquad (L \otimes P_1 \otimes P_2 \otimes \dots \otimes P_q) \otimes R.
    \end{cases}
\end{equation*}
Clearly, the sparsity pattern of the leftmost factor $L$ is simply the sparsity pattern of the left factor of the first length $2$ decomposition constituting the branch. The sparsity pattern of $P_1$ is then deduced from the sparsity patterns of $L$ and $L \otimes P_1$, the latter simply being the sparsity pattern of the left factor in the second length $2$ factorization. More generally, one deduces the sparsity pattern of $P_{i}$ from the sparsity patterns of $L \otimes P_1 \otimes \dots \otimes P_{i-1}$ and $L \otimes P_1 \otimes \dots \otimes P_{i-1} \otimes P_{i}$, which are nothing but the left factors of the $i$th and $(i+1)$th factorizations entering the branch. Finally, the sparsity pattern of the rightmost factor $R$ is simply the sparsity pattern of the right factor of the last factorization. In summary, this process consists in repeatedly finding the sparsity pattern of the right factor of a Kronecker product knowing the sparsity pattern of the left factor and the one of the product. The solution to this problem is straightforward: assuming we are given an $(n_1,n_2)$ factorization of $A = A_1 \otimes A_2$, where the sparsity patterns of $A$ and $A_1$ are known, then the sparsity pattern of $A_2$ is simply read from the $(i,j)$th block of $A$ of size $n_2$ corresponding to any nonzero entry $(i,j)$ of $A_1$. Similarly, if instead the sparsity patterns of $A$ and $A_2$ were known, the sparsity pattern of $A_1$ would easily be deduced by tracking down the position of the nonzero blocks of $A$ of size $n_2$. Hence, one could equivalently apply the aforementioned process from right to left instead of from left to right. In either case, all that is needed is the sparsity pattern of the factor matrices for the length $2$ factorizations entering a branch.

Now that we have a convenient way of constructing prime factorizations and finding the sparsity pattern of its factors, we present in the next section an elegant way of visualizing them.

\section{The decomposition graph}
\label{se: decomposition_graph}
In this section, we build a directed graph $G$ that encodes the construction of factorization branches. A graph is characterized by a set of vertices $V(G)$ and a set of edges $E(G)$, that are pairs of vertices. In our setting, this graph is more specifically a multigraph: it may have multiple edges connecting the same pair of vertices. Its construction obeys the following rules:
\begin{itemize}[noitemsep]
    \item $V(G)=\dutchcal{L}$.
    \item For each branch $k$ we connect successive left indices by an edge such that $\dutchcal{L}^k = \{\lcal_0^k,\lcal_1^k,\dots,\lcal_{q_k}^k\} \subseteq \dutchcal{L}$ forms a path. Each path is identified by a distinct color or label.
    \item For the $k$th path $\dutchcal{L}^k$, the ``weights'' on each edge are the integers $p_j^k$.
\end{itemize}

From this construction, we immediately deduce the following properties:
\begin{itemize}[noitemsep]
    \item The length of the $k$th factorization $\ell_k = |\dutchcal{L}^k|+1$ is related to the length of the $k$th path.
    \item Isolated vertices correspond to prime length $2$ factorizations (but are still identified as ``paths'').
    \item The number of paths is equal to the number of branches, which is itself equal to the number of prime factorizations.
\end{itemize}

For reading the sizes of the factor matrices in the $k$th factorization, we start from the root $\lcal_0^k$ and read the successive weights on the edges until arriving at the end of the path. The size of the last factor matrix is then $n/\lcal_{q_k}^k$. The decomposition graphs for the various examples we have encountered so far are shown in \Cref{fig: decomp_graphs_examples}. It unveils in a single figure the Kronecker structure of a matrix and becomes especially useful for matrices admitting multiple distinct factorizations. Furthermore, since it visualizes the structure rather than the matrix, different matrices may have the same decomposition graph.

\begin{figure}[H]
     \centering
     \begin{subfigure}[t]{0.48\textwidth}
    \centering
    \includegraphics[width=\textwidth]{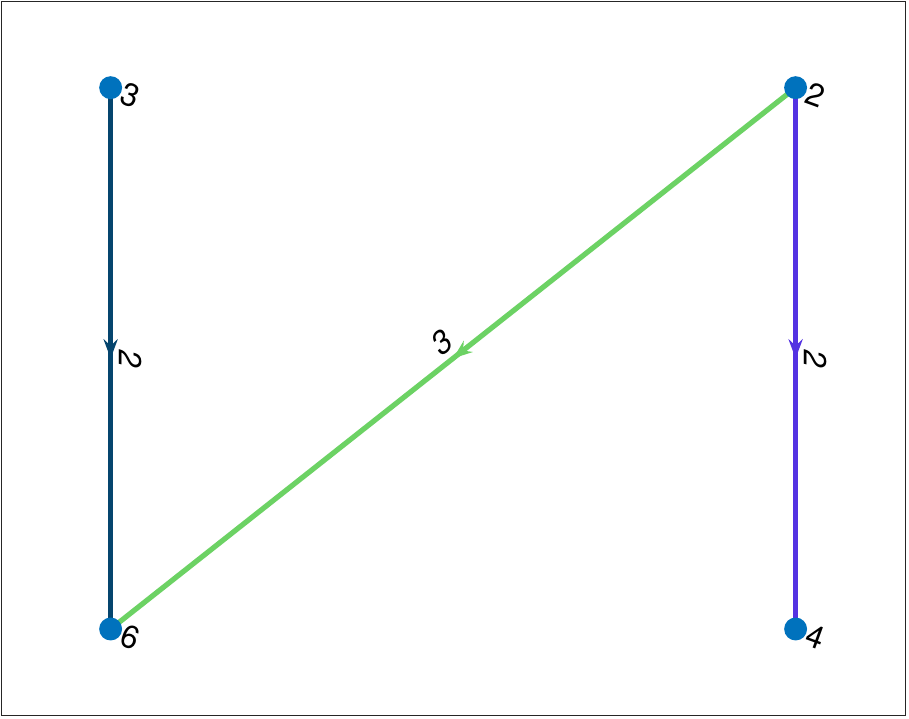}
    \caption{\Cref{ex: max_mat_12}}
    \label{fig: maximal_mat_12}
     \end{subfigure}
     \hfill
     \begin{subfigure}[t]{0.48\textwidth}
    \centering
    \includegraphics[width=\textwidth]{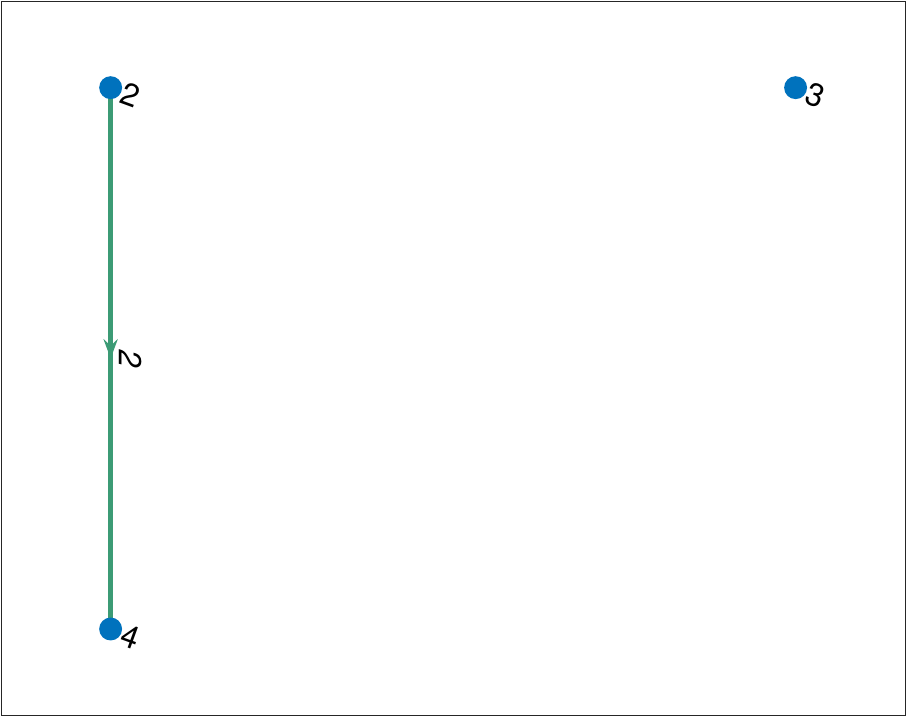}
    \caption{\Cref{ex: non_max_mat_12}}
    \label{fig: non_max_mat_12}
     \end{subfigure}
     \hfill
    \begin{subfigure}[t]{0.48\textwidth}
    \centering
    \includegraphics[width=\textwidth]{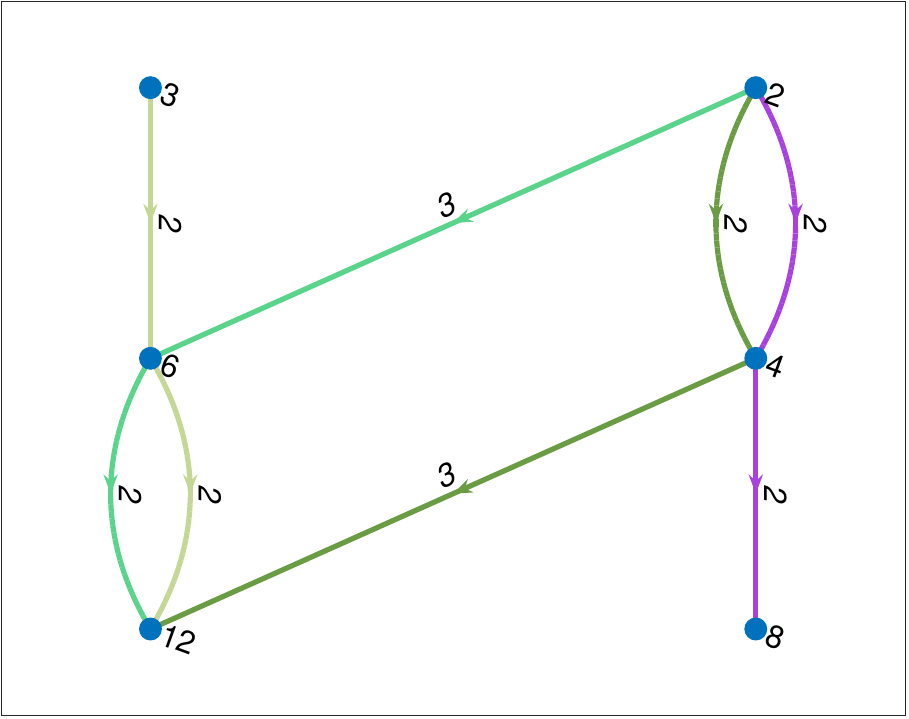}
    \caption{\Cref{ex: max_mat_24}}
    \label{fig: max_mat_24}
     \end{subfigure}
     \hfill
    \begin{subfigure}[t]{0.48\textwidth}
    \centering
    \includegraphics[width=\textwidth]{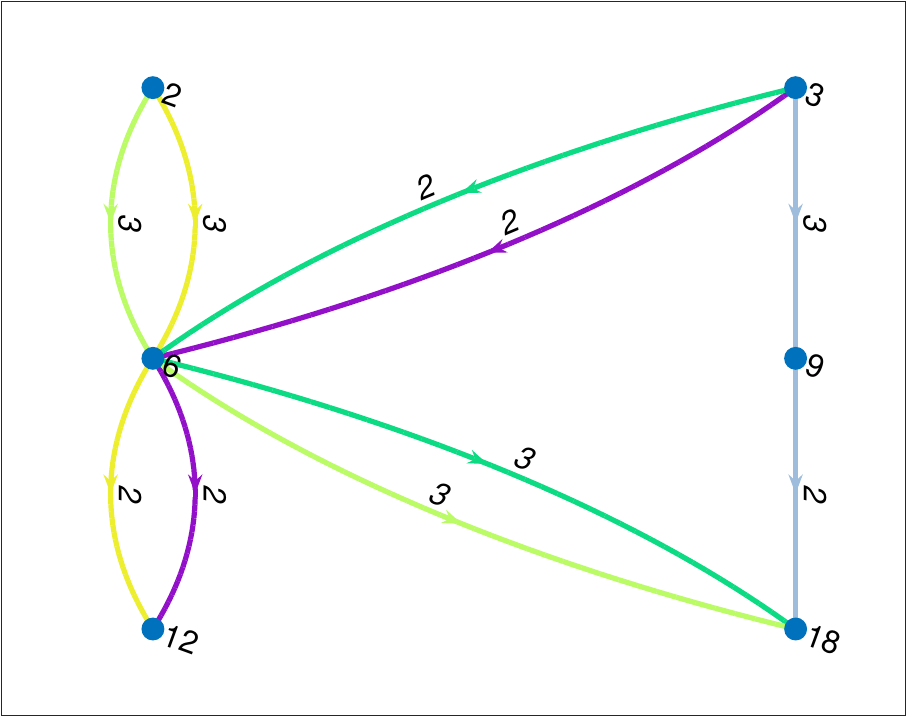}
    \caption{\Cref{ex: mat_36}}
    \label{fig: mat_36}
     \end{subfigure}
     \hfill
    \caption{Decomposition graphs}
    \label{fig: decomp_graphs_examples}
\end{figure}

\section{Applications}
\label{se: applications}
\subsection{Space-time isogeometric discretizations}
\label{se: space_time_iga}
One of the most significant applications of this research consists in identifying potentially good (approximate) Kronecker factorizations of a real or complex matrix $B$ just by analyzing its sparsity pattern. As an instructive example, we consider a space-time isogeometric discretization of the heat equation, a well studied parabolic PDE. For time-dependent PDEs, such as the heat or the wave equation, space-time methods discretize both spatial and temporal domains with finite elements \cite{loli2020efficient,loli2023high}. In isogeometric analysis, finite element spaces are tensor products of smooth spline spaces \cite{hughes2005isogeometric,cottrell2009isogeometric} and lead to large structured system matrices where the spatial and temporal degrees of freedom are solved all at once \cite{mcdonald2018preconditioning}. We consider in this section the discretization of the heat equation
\begin{align*}
 \partial_{t} u(\mathbf{x},t)-\Delta u(\mathbf{x},t) &=f(\mathbf{x},t) & &\text{ in } \Omega \times (0,T],  \\
 u(\mathbf{x},t)&=0 & &\text{ on } \partial \Omega \times (0,T],  \\
 u(\mathbf{x},0)&=u_0(\mathbf{x}) & &\text{ in } \Omega,
\end{align*}
over the space-time cylinder $Q=\Omega \times (0,T)$, where $\Omega \subset \mathbb{R}^3$ is the magnet-shaped domain shown in \Cref{fig: magnet} and $T>0$ is the final time. The unknown function $u \colon \overline{Q} \to \mathbb{R}$ represents a temperature field, $f$ is a known source term and $u_0$ is an initial condition. 

\begin{figure}[H]
    \centering
    \includegraphics[width=0.5\linewidth]{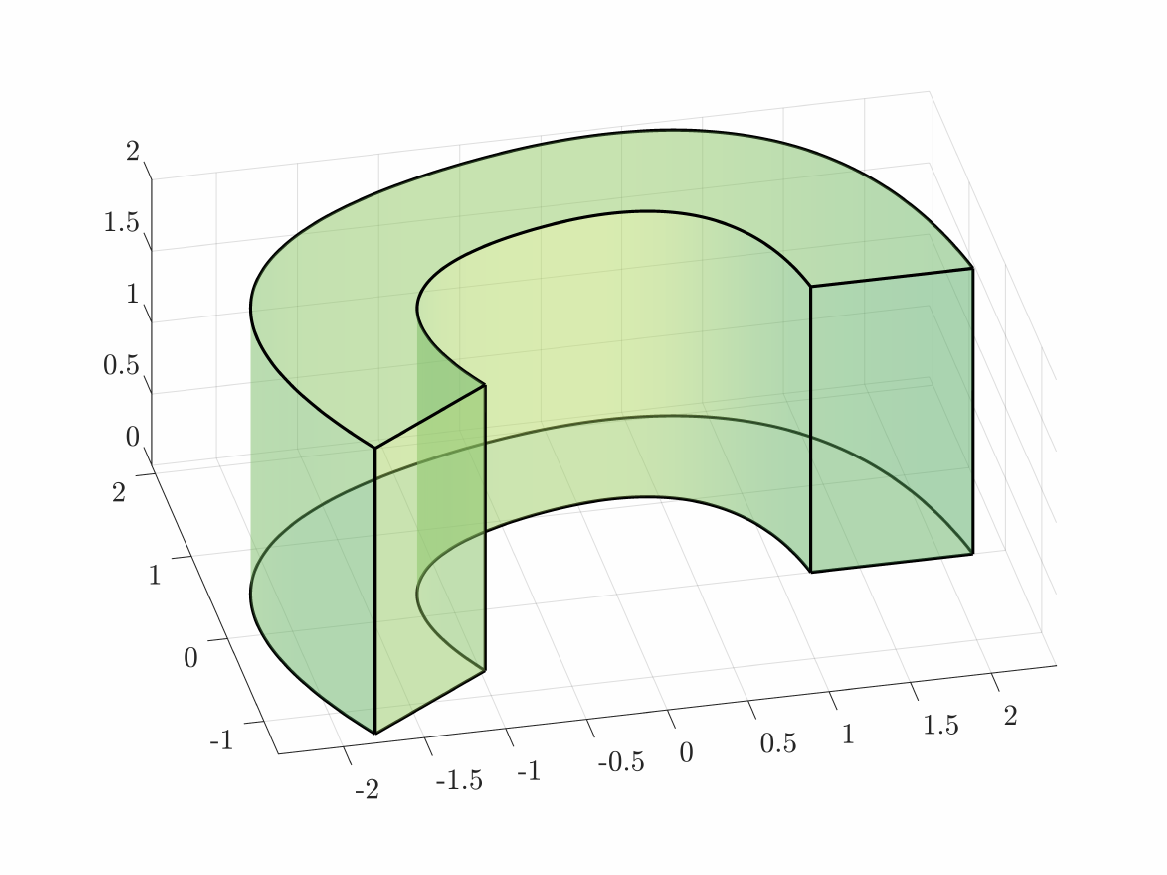}
    \caption{Magnet shaped domain taken from \cite[Figure 5.7]{voet2025mass}}
    \label{fig: magnet}
\end{figure}

Discretizing the problem with space-time isogeometric methods requires solving a large linear system whose coefficient matrix is (see e.g. \cite{loli2020efficient})
\begin{equation}
\label{eq: B_heat_equation}
B = W_t \otimes M_s + M_t \otimes K_s
\end{equation}
where 
\begin{align*}
    (W_t)_{ij} &= \int_0^T b_j'(t)b_i(t) \mathrm{d}t & (M_t)_{ij} &= \int_0^T b_j(t)b_i(t) \mathrm{d}t \qquad i,j=1,\dots,n_t\\
\end{align*}
are the ``temporal'' finite element matrices and $\{b_i(t)\}_{i=1}^{n_t}$ are the B-spline basis functions for the temporal domain $(0,T)$. Similarly,
\begin{align*}
    (K_s)_{ij} &= \int_\Omega \nabla B_j(\bm{x}) \cdot \nabla B_i(\bm{x}) \mathrm{d}\Omega & (M_s)_{ij} &= \int_\Omega B_j(\bm{x})B_i(\bm{x}) \mathrm{d}\Omega \qquad i,j=1,\dots,n_s\\
\end{align*}
are the ``spatial'' stiffness and mass matrices and $\{B_i(t)\}_{i=1}^{n_s}$ are the B-spline basis functions for the spatial domain $\Omega$. The interested reader may consult \cite{hughes2005isogeometric,cottrell2009isogeometric} for the construction of the B-spline basis and a gentle introduction to isogeometric analysis. Although the spatial stiffness and mass matrices $K_s$ and $M_s$ are generally not a Kronecker product, for certain spline parameterizations they are often exceedingly well approximated by a sum of Kronecker products \cite{hofreither2018black,mantzaflaris2017low,scholz2018partial}, a property that is also reflected in their sparsity pattern. Indeed, these matrices exhibit a hierarchical block structure, where the block sizes and their bandwidths are directly related to the spline discretization parameters \cite{hofreither2018black,voet2025mass}. Their Kronecker product with the banded temporal stiffness and mass matrices $W_t$ and $M_t$ in \eqref{eq: B_heat_equation} just adds another hierarchical level, as shown in \Cref{fig: sparsity_application_1}. Thus, the sparsity pattern of $B$ still hides a Kronecker product although the matrix itself may not be exactly factorized. In this example, we apply our algorithm to identify the sizes of the constituting factor matrices before computing an approximate Kronecker factorization. This example is obviously contrived since the sizes are commonly deduced from the discretization parameters and boundary conditions. However, that information may not always be available if the discretization is carried out independently of the linear system solver. In contrast, our algorithm operates in a ``black-box'' manner and allows reading the discretization parameters just by analyzing the sparsity pattern of the matrix. As a matter of fact, the discretization parameters for this example lead to a system matrix $B$ of size $n=53568$ and our algorithm perfectly recovers the (unique) $(31,12,12,12)$ prime factorization of its sparsity pattern, in agreement with \Cref{fig: sparsity_application_1}\footnote{The code for reproducing the results is freely available at the following address: \url{https://github.com/YannisVoet/Kronecker}}. Once the sizes are known, one may easily deduce the sparsity pattern of the factors constituting the decomposition, as well as their bandwidth. In this example, each factor is banded of bandwidth $2$. We may relate all that information back to the mesh sizes and spline orders of the discretization. However, in this example, we are primarily interested in computing an approximate Kronecker factorization of $B$, which may later serve as a preconditioner. The Kronecker structure already exhibited in \eqref{eq: B_heat_equation} suggests approximating $B$ by a sum of two length $4$ Kronecker products
\begin{equation*}
    B \approx \tilde{B} = A_1 \otimes B_1 \otimes C_1 \otimes D_1 + A_2 \otimes B_2 \otimes C_2 \otimes D_2.
\end{equation*}
Computing this approximation requires tensor decomposition techniques, as described in \cite{langville2004akronecker}. For this purpose, we have used Matlab's Tensor Toolbox \cite{bader2023matlab}. The sizes of the factor matrices entering the decomposition are directly supplied by our algorithm. Those sizes directly reflect the structure of the matrix and are a rather natural choice for attempting an approximate factorization. Of course, there exist multiple other candidate sizes for computing approximate factorizations but the large error incurred may later impede on the preconditioner's effectiveness. For demonstrating it, let us compute an approximate factorization for three randomly chosen, albeit compatible sizes $\bm{n}_i$ for $i=1,2,3$ and compare them to the sparsity-informed guess $\bm{n}=(31,12,12,12)$. As shown in \Cref{tab: errors}, the approximation errors in the Frobenius norm are about $10$ times larger than for the sparsity-informed guess. Approximating $K_s$ and $M_s$ each with a single length $3$ factorization and plugging the result in \eqref{eq: B_heat_equation} also produces a valid approximation but the error $(0.2855)$ is still significantly larger than for the sparsity-informed length $4$ factorization. Smaller approximation errors are expected to produce better preconditioners. For illustrating it, we solve the linear system $B\bm{x}=\bm{e}_1$, where $\bm{e}_1$ is the first vector of the canonical basis of $\mathbb{R}^n$. In this experiment, the nonsymmetric linear system is solved iteratively with a right preconditioned GMRES method \cite{saad2003iterative}, restarted every 30 iterations until reaching an absolute residual norm of $10^{-8}$. The method converged after $109$, $13$ and $19$ iterations for $\bm{n}_1$, $\bm{n}_2$ and $\bm{n}_3$ whereas only $8$ iterations were required for $\bm{n}$. The iteration count increases to $52$ when separately approximating $K_s$ and $M_s$. This example further highlights the value of approximate Kronecker factorizations for building preconditioners and is in this context one among many other strategies proposed for space-time discretizations; see e.g. \cite{loli2020efficient,mcdonald2018preconditioning,kressner2023improved}.

\begin{figure}[H]
     \centering
     \begin{subfigure}[t]{0.24\textwidth}
    \centering
    \includegraphics[width=\textwidth]{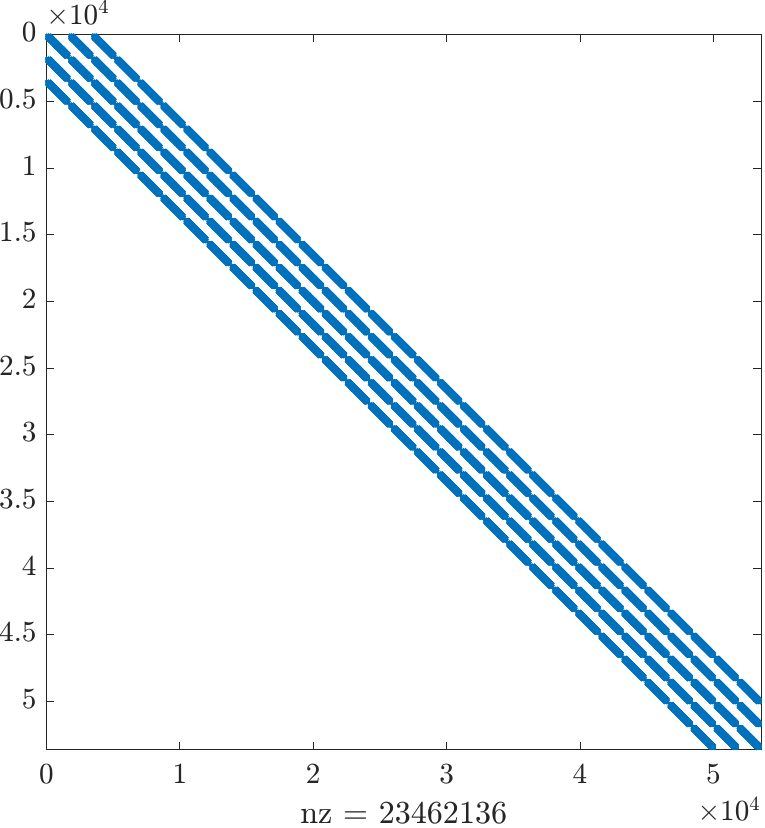}
    \caption{Sparsity pattern of $B$}
    \label{fig: sparsity_1}
     \end{subfigure}
     \hfill
     \begin{subfigure}[t]{0.24\textwidth}
    \centering
    \includegraphics[width=\linewidth]{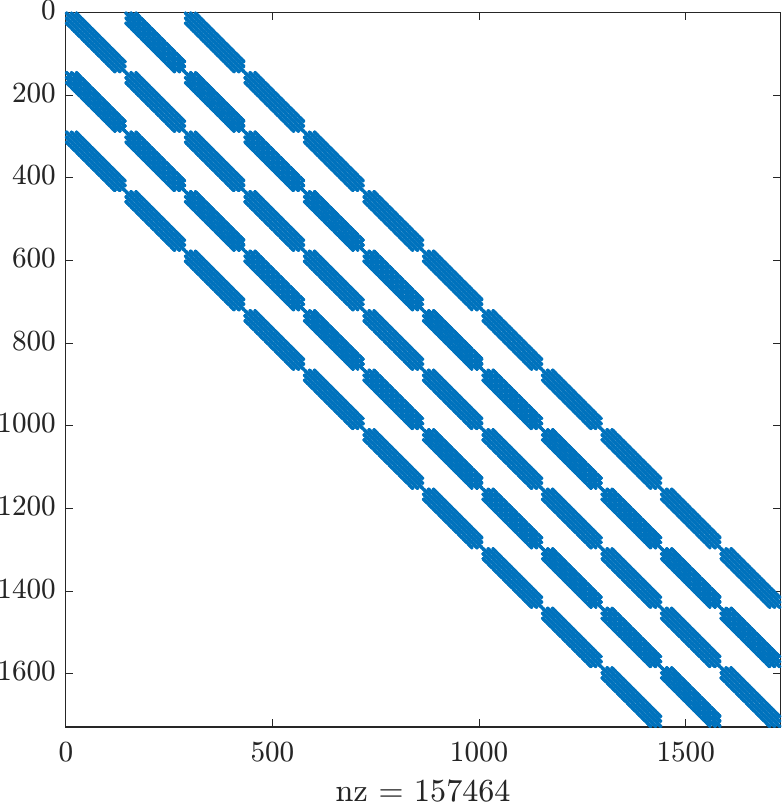}
    \caption{Sparsity pattern of the leading block in \Cref{fig: sparsity_1}}
    \label{fig: sparsity_2}
     \end{subfigure}
     \hfill
    \begin{subfigure}[t]{0.24\textwidth}
    \centering
    \includegraphics[width=\linewidth]{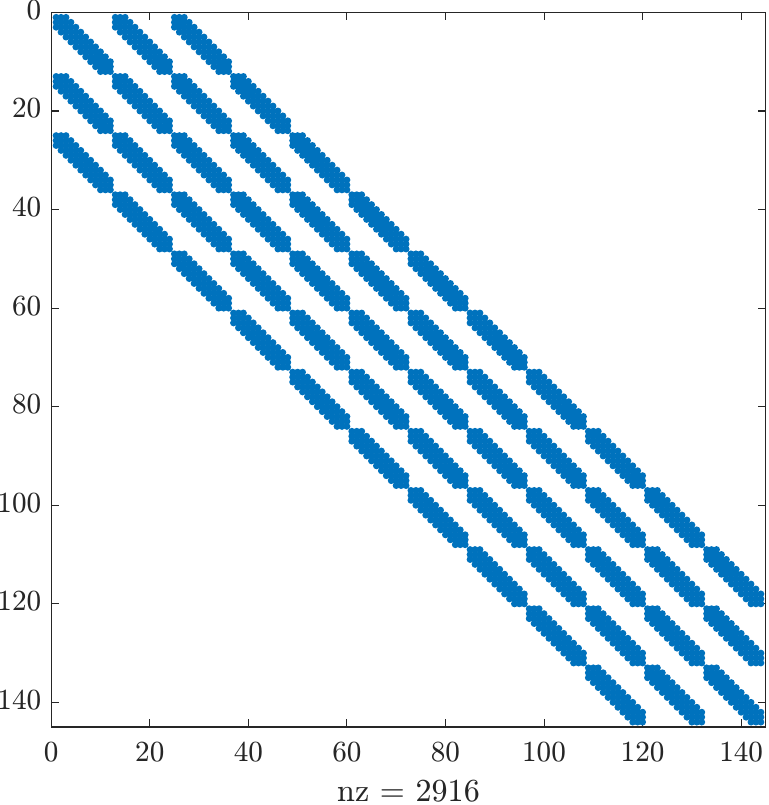}
    \caption{Sparsity pattern of the leading block in \Cref{fig: sparsity_2}}
    \label{fig: sparsity_3}
     \end{subfigure}
     \hfill
    \begin{subfigure}[t]{0.24\textwidth}
    \centering
    \includegraphics[width=\linewidth]{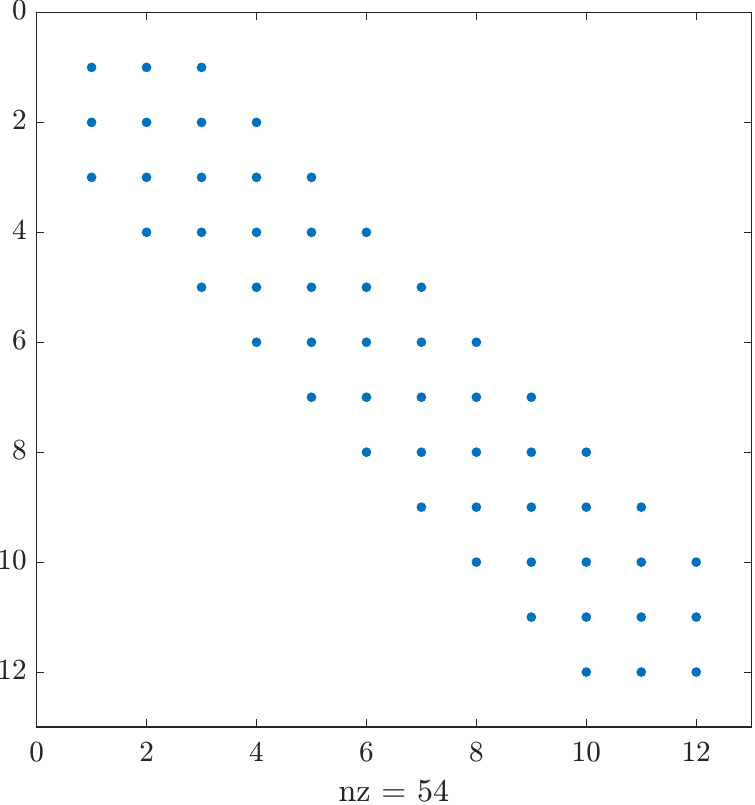}
    \caption{Sparsity pattern of the leading block in \Cref{fig: sparsity_3}}
    \label{fig: sparsity_4}
     \end{subfigure}
     \hfill
    \caption{Hierarchical block structure of $B$}
    \label{fig: sparsity_application_1}
\end{figure}

\begin{table}[H]
    \centering
    \begin{tabular}{|m{3cm}|m{2cm}|}
    \hline
     Size & $\|B-\tilde{B}\|_F$ \\
     \hline
     $\bm{n}=(31,12,12,12)$ & 0.0489 \\
     $\bm{n}_1=(31,144,6,2)$ & 0.5561 \\
     $\bm{n}_2=(31,12,6,24)$ & 0.3832 \\
     $\bm{n}_3=(62,6,6,24)$ & 0.4973 \\
    \hline
    \end{tabular}
    \caption{Sizes of the factor matrices and resulting approximation errors}
    \label{tab: errors}
\end{table}

\begin{remark}
In the example of \Cref{se: space_time_iga}, the sparsity pattern had a unique prime factorization. In fact, from the point of view of preconditioning, the existence of multiple prime factorizations is undesirable since it still leaves multiple candidate sizes. In the extreme case of dense matrices, the sparsity pattern is maximal and does not help at all for choosing candidate sizes. In such cases, exhaustive search strategies described in \cite{cai2022kopa} might become necessary, unless additional information on the problem’s origin is known.    
\end{remark}

\subsection{Decomposition and visualization of Kronecker graphs}
Our second application pertains to the representation and visualization of networks as Kronecker graphs. Given two graphs $G_1$ and $G_2$ with vertex set $V(G_i)$ and edge set $E(G_i)$ for $i=1,2$, their Kronecker product $G_1 \otimes G_2$ (sometimes also called direct, cardinal, categorical or tensor product) is a larger graph with vertex set defined as the Cartesian product 
\begin{equation*}
    V(G) = V(G_1) \times V(G_2) = \{(u,v) \colon u \in V(G_1),\, v \in V(G_2)\}
\end{equation*}
and edge set
\begin{equation*}
     E(G) = \{((u,v),(u',v')) \colon (u,u') \in E(G_1),\, (v,v') \in V(G_2)\}.
\end{equation*}
Up to a relabeling of the vertices, the adjacency matrix $A$ of a Kronecker graph is the Kronecker product of the adjacency matrices $A_i$ \cite{calderoni2021direct}; i.e. there exists a permutation matrix $P$ such that
\begin{equation*}
    P^TAP = A_1 \otimes A_2.
\end{equation*}
This permutation matrix is a major hurdle for identifying Kronecker graphs and a key difference with the matrix case. Nevertheless, Kronecker graphs are attractive for multiple reasons. Firstly, the structure and properties of Kronecker graphs are easier to analyze and often deduced from the properties of the smaller graphs constituting them. Thus, they effectively condense information. Secondly, many of those properties accurately model real networks. For instance, in the machine learning community, Leskovec et al. \cite{leskovec2007scalable,leskovec2010kronecker} realized that synthetic Kronecker graphs could somewhat reproduce the degree distribution, diameter and spectrum of real networks, a set of properties earlier models often failed to mimic. Finally, the inherent structure of Kronecker graphs also helps visualizing them. Here we will especially focus on this last point, which is largely independent of any application. Despite a late surge of interest in Kronecker graphs, attempts at visualizing them are rather sparse. As a matter of fact, \cite[Chapter 11]{kepner2011graph} is the only noteworthy contribution we are aware of. Therein, apart from visualizing the sparsity pattern of the adjacency matrix, the authors propose a 3D visualization of Kronecker graphs by projection them onto the surface of a sphere. However, the figures produced are somewhat confusing and difficult to interpret. In this section, we present a new visualization technique that exploits the Kronecker structure of the (reordered) adjacency matrix $A$. Assuming we have found an $(n_1,n_2,\dots,n_d)$ factorization of $A$, we first label the vertices and identify them with tuples $\bm{v}=(v_1,v_2,\dots,v_d)$, where $1 \leq v_i \leq n_i$ for $i=1,\dots,d$. This mindset allows modeling short, medium or long distance interactions between vertices belonging, say, to different communities. If only the last few indices of two connected vertices $\bm{v}_1$ and $\bm{v}_2$ differ, then their link represents relatively short-distance interactions, within the same community. On the contrary, if any of the first few indices differ, then they might represent long-distance inter-community interactions. The construction described below conforms with such intuitive understanding of the network by explicitly defining the position of the graph's vertices. In order to model communities on different levels (e.g. local, regional, continental, inter-continental, etc), we first draw a circle of unit radius $r_1$ centered at the origin of the complex plane and uniformly place $n_1$ nodes along the circle. These nodes are simply the $n_1$ roots of unity $z_k = \mathrm{e}^{\frac{2 \pi (k-1) \mathrm{i}}{n_1}}$ for $k=1,\dots,n_1$ and model the centers of long-distance (inter-continental) communities. For moving down to the continental scale, we draw circles around each point $z_k$ with a smaller radius $r_2$ and place $n_2$ nodes along each of these circle for modeling the centers of continental hubs. We then repeat the process until reaching the local scale. At this stage, the points placed on the smallest circles are the position of the vertices of the graph. The construction process is illustrated in \Cref{fig: skeleton} for $d=3$ and $(n_1,n_2,n_3) = (4,3,2)$. Note that it only depends on the sizes of the factor matrices, not on the connectivity of the network.

The algorithm combines rotations and translations to produce a visually pleasing and interpretable representation of the network. More formally, for each level $j=1,\dots,d$, we first place equidistance points on a circle in the complex plane
\begin{equation*}
    z^{(j)}_{k} = r_j \mathrm{e}^{\frac{2 \pi (k-1) \mathrm{i}}{n_j}} \qquad k=1,\dots, n_j, \quad j=1,\dots,d.
\end{equation*}
where $(r_j)_{j=1}^n$ is a decreasing sequence of radii and $\mathrm{i}$ denotes the imaginary unit. Additionally, we denote
\begin{equation*}
    \theta^{(j)}_{k} = \arg(z^{(j)}_{k}) + \frac{\pi}{2} \quad \text{and} \quad r^{(j)}_{k} = \mathrm{e}^{\theta^{(j)}_{k} i}
\end{equation*}
the (shifted) phase angle and rotation, respectively. The shift of $\frac{\pi}{2}$ is purely for aesthetic reasons as it produced clearer figures. The position of the vertices in the graph is then defined recursively, starting from the bottom of the hierarchy, moving upwards and augmenting the index set at each step. We first initialize $g^{(d)}_{k_d} = z^{(d)}_{k_d}$. Then, assuming $g^{(j+1)}_{(k_{j+1},\dots,k_d)}$ is known, we define
\begin{equation*}
    g^{(j)}_{(k_j,k_{j+1},\dots,k_d)} = z^{(j)}_{k_j} + r^{(j)}_{k_j} g^{(j+1)}_{(k_{j+1},\dots,k_d)} \quad j=1,\dots,d-1.
\end{equation*}
At the end of the recursion, $p_{\bm{k}} = g^{(1)}_{\bm{k}}$ defines the position of vertex $\bm{k}=(k_1,k_2,\dots,k_d)$ and its coordinate values are finally retrieved as
\begin{equation*}
    x_{\bm{k}} = \re(p_{\bm{k}}), \quad y_{\bm{k}} = \im(p_{\bm{k}}).
\end{equation*}
Let us visualize the output of this algorithm on the Kronecker graph whose adjacency matrix is
\begin{equation}
\label{eq: adjacency_1}
A = A_1 \otimes A_2 \otimes A_3 =
\begin{pmatrix}
1 & 0 & 0 & 0 \\
1 & 1 & 0 & 0 \\
1 & 1 & 1 & 0 \\
0 & 0 & 0 & 1
\end{pmatrix}
\otimes
\begin{pmatrix}
1 & 0 & 0 \\
1 & 1 & 0 \\
0 & 0 & 1
\end{pmatrix}
\otimes
\begin{pmatrix}
0 & 1 \\
1 & 0
\end{pmatrix}.
\end{equation}
The $(4,3,2)$ factorization of this matrix leads to the skeleton shown in \Cref{fig: skeleton}. The final step simply consists in connecting the vertices (i.e. the black dots in \Cref{fig: skeleton}) according to the connectivity encoded in $A$. The off-diagonal entries in $A_1$, $A_2$ and $A_3$ may model long, medium and short-distance interactions between continental, regional and local communities, respectively. \Cref{fig: kronecker_graph_1} faithfully captures this interpretation. Indeed, \Cref{fig: kronecker_graph_1} reveals interactions between the first, second and third continental communities, in agreement with the off-diagonal entries of $A_1$. We also easily identity a connection between the first and second regional communities within each continental community, which results from the off-diagonal entry in $A_2$. Overall, the algorithm satisfactorily uncovers the salient features of the network. The same cannot be said of Matlab's built-in visualization algorithms, which tend to isolate connected components or pick up other important properties of the network, albeit less relevant in this context.

\begin{figure}[H]
     \centering
     \begin{subfigure}[t]{0.48\textwidth}
    \centering
    \includegraphics[width=\textwidth]{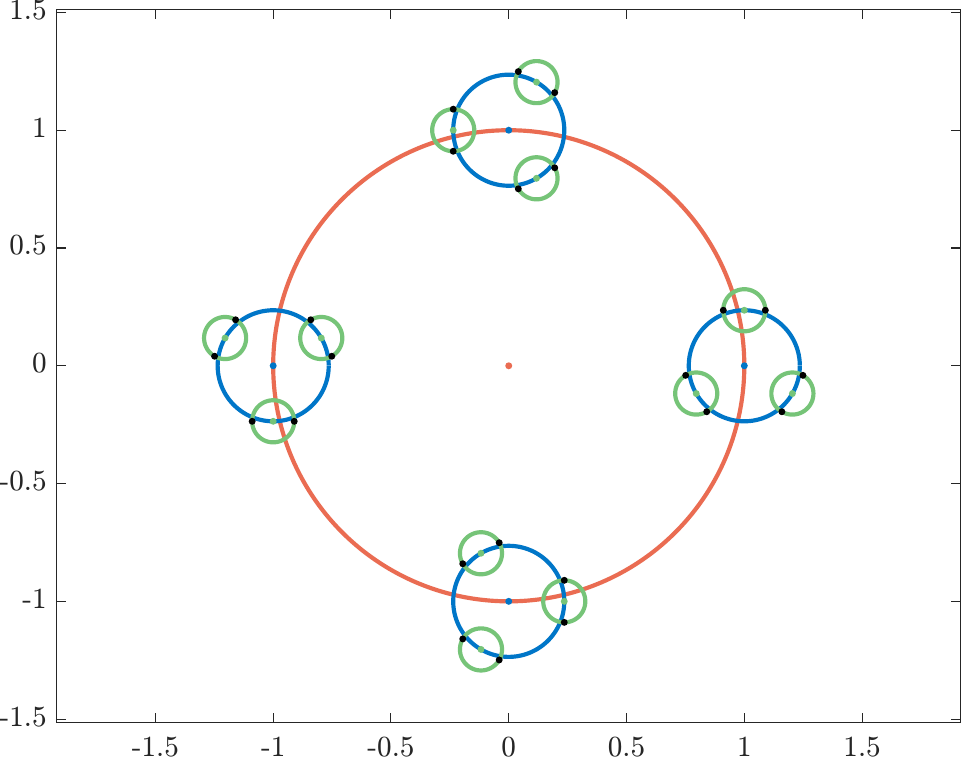}
    \caption{Skeleton: the black dots are the position of the vertices in the network}
    \label{fig: skeleton}
     \end{subfigure}
     \hfill
     \begin{subfigure}[t]{0.48\textwidth}
    \centering
    \includegraphics[width=\linewidth]{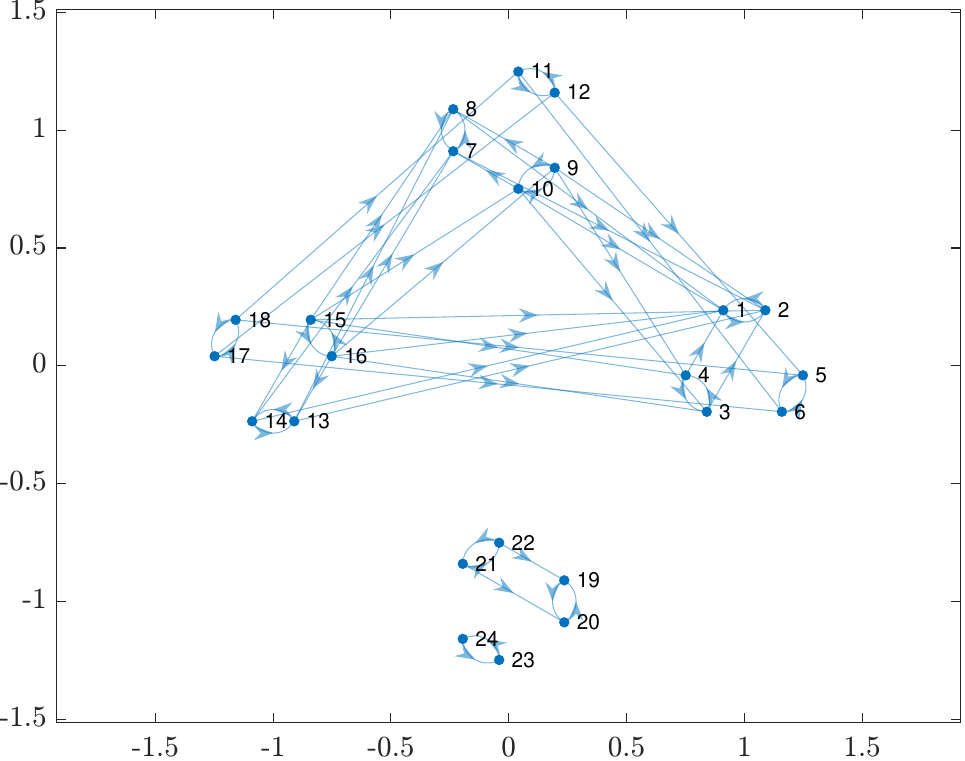}
    \caption{Graph}
    \label{fig: kronecker_graph_1}
     \end{subfigure}
     \hfill
    \caption{Kronecker graph visualization for \eqref{eq: adjacency_1}}
    \label{fig: application_2}
\end{figure}

To further highlight the capabilities of our algorithm, we test it on one of the examples presented in \cite[Figure 3]{leskovec2010kronecker}. The adjacency matrix is defined as $A=\bigotimes_{i=1}^3 A_i$, with the arrowhead matrices
\begin{equation*}
A_i=
\begin{pmatrix}
1 & 1 & 1 & 1 \\
1 & 1 & 0 & 0 \\
1 & 0 & 1 & 0 \\
1 & 0 & 0 & 1
\end{pmatrix}
\quad i=1,2,3.
\end{equation*}
\Cref{fig: kronecker_graph_2} visualizes the graph by exploiting its Kronecker structure. Once again, the layout helps identify inter-community interactions. By construction, the edges are concentrated along specific directions, which allows easily extracting useful patterns. For large networks, it might be useful to alter the color shading or transparency of the edges depending on the nature of the interactions they model (e.g. long, medium or short-distance interactions). In a computer graphics tool, one may also zoom in as much as needed to view interactions within local communities.

In fact, our visualization algorithm is applicable regardless of the graph structure, provided the sizes are given. However, it is usually ill-suited unless the graph is ``close'' to a Kronecker graph. The method described in \Cref{se: decomposable_matrices} allows identifying a Kronecker structure for a \emph{fixed} adjacency matrix but does not immediately tackle the graph factorization problem. Nevertheless, it may serve as a building block within other strategies, as for example in \cite{calderoni2023heuristic}.

\begin{figure}[H]
    \centering
    \includegraphics[width=0.5\linewidth]{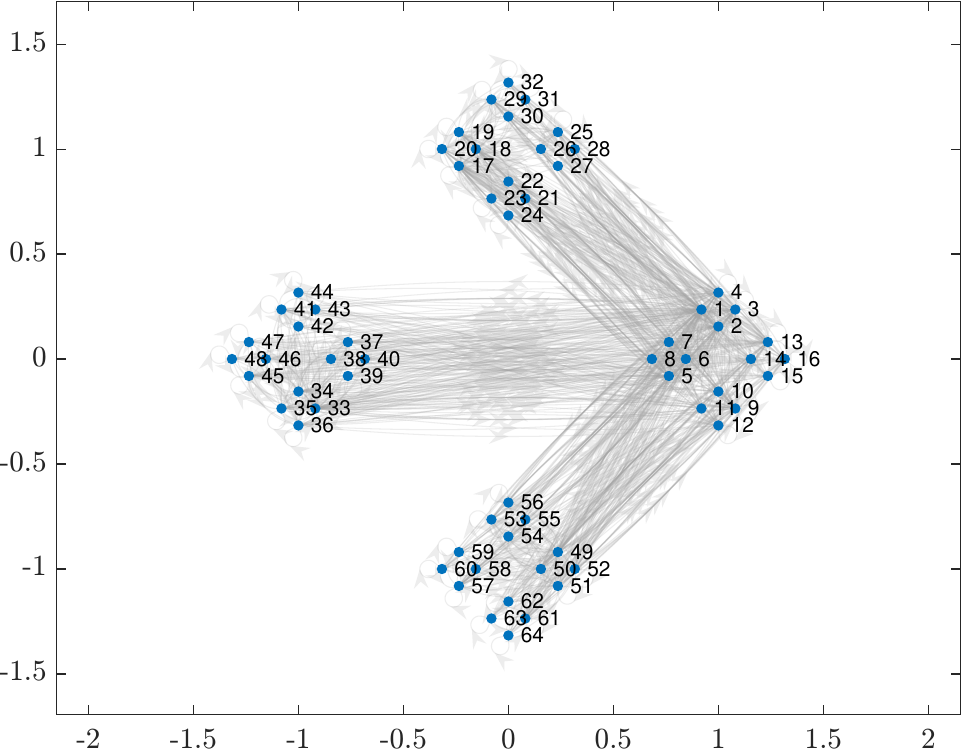}
    \caption{Kronecker graph for the first example in \cite[Figure 3]{leskovec2010kronecker}}
    \label{fig: kronecker_graph_2}
\end{figure}

\subsection{Quantum computing}
\label{se: quantum_computing}
Quantum computing is a fairly recent computational paradigm built from concepts in quantum mechanics. Contrary to classical computing, its basic unit of information, the qubit, may live in a state of superposition, a linear combination of two states. Its greatest promise lies in solving complex problems that would otherwise be infeasible on a classical computer in any reasonable time. One prominent example is the groundbreaking work of Shor \cite{shor1994algorithms,shor1999polynomial} for integer factorizations and the potential implications in cryptography. In recent years, quantum computing has gained considerable momentum, hoping for similar achievements in solving other problems. For an introduction to quantum computing, readers may consult \cite{kaye2006introduction,nielsen2010quantum}, among many other references. For a $d$-qubit system, quantum operations are defined through unitary operators (or matrices) $U \in U(2^d)$, called \emph{gates}, acting on state vectors $\ket{\psi} \in \mathbb{C}^{2^d}$. Hereafter, $U(n)$ denotes the group of unitary matrices of size $n$ and we introduce the normalized Hilbert–Schmidt (or trace) inner product $\langle A,B \rangle = \frac{1}{n}\trace(A^*B)$ with induced norm $\|A\|_H = \sqrt{\langle A, A \rangle}$. A quantum gate is called separable if its matrix representation is decomposable and is called entangling otherwise. The most obvious instance of an entangling gate is the controlled-not (or CNOT) gate
\begin{equation*}
    \begin{pmatrix}
        1 & 0 & 0 & 0 \\
        0 & 1 & 0 & 0 \\
        0 & 0 & 0 & 1 \\
        0 & 0 & 1 & 0
    \end{pmatrix}.
\end{equation*}
In the context of quantum computing, entanglement is a desirable property for reproducing quantum phenomena and many authors have introduced measures for quantifying and maximizing it \cite{kraus2001optimal,balakrishnan2010entangling}. A related question is to identify separable gates. Exact separability is rather uncommon in quantum systems since it boils down to local operations on independent subsystems. However, quantum gates are commonly defined through sequences of unitary operations such that $U=U_1U_2\dots U_k$, where $U_i \in U(2^d)$ for all $i=1,\dots,k$. Although $U$ is rarely separable, the gates $U_i$ constituting the product often are. Thus, one may try identifying separability within a subsequence of operations. For a general unitary matrix, an analytical method, related to the so-called Schmidt decomposition \cite{nielsen2010quantum}, is to first decompose $U$ in the Pauli basis. For single qubit systems, the Pauli basis is $\{\sigma_1,\sigma_2,\sigma_3,\sigma_4\}$, where
\begin{equation*}
    \sigma_1 =
    \begin{pmatrix}
        1 & 0 \\
        0 & 1
    \end{pmatrix},
    \quad
    \sigma_2 =
    \begin{pmatrix}
        0 & 1 \\
        1 & 0
    \end{pmatrix},
    \quad
    \sigma_3 =
    \begin{pmatrix}
        0 & -i \\
        i & 0
    \end{pmatrix},
    \quad
    \sigma_4 =
    \begin{pmatrix}
        1 & 0 \\
        0 & -1
    \end{pmatrix}.
\end{equation*}
For $2$-qubit systems, $\{\sigma_i \otimes \sigma_j\}_{i,j=1}^4$ forms an orthonormal basis of $U(4)$ for the normalized Hilbert–Schmidt inner product. Consequently, any $U \in U(4)$ may be decomposed as
\begin{equation*}
    U = \sum_{i,j=1}^4 \alpha_{ij} (\sigma_i \otimes \sigma_j)
\end{equation*}
with coefficients $\alpha_{ij} = \langle \sigma_i \otimes \sigma_j, U \rangle$. It follows that $U$ is separable if and only if $\alpha_{ij}=a_i b_j$. In other words, the coefficient matrix $(\alpha_{ij})_{i,j=1}^4$ is rank-$1$. Generalizing this method to $d$-qubit systems with $d \geq 2$ is straightforward. However, it is analogous to Van Loan's algorithm with the sizes $\bm{n}=(2,2,\dots,2) \in \mathbb{N}^d$ and merely tests separability in the Pauli basis instead of the canonical one. Moreover, even if the test fails, the gate may still be separable for different sizes of the factor matrices. In this section, we apply our algorithm to the matrix representation of the operator to classically determine whether it is separable. Similarly to the example in \Cref{se: space_time_iga}, our algorithm operates on the sparsity pattern for determining candidate sizes before attempting a tensor decomposition. We consider the synthetic gate $V$ encoding the circuit shown in \Cref{fig: circuit}. For an introduction to quantum circuits, interested readers may refer to \cite{kaye2006introduction,nielsen2010quantum}. The symbols in the circuit represent single-qubit or multi-qubit gates but their definition is irrelevant here.

\begin{figure}[H]
    \centering
    \includegraphics[width=0.25\linewidth]{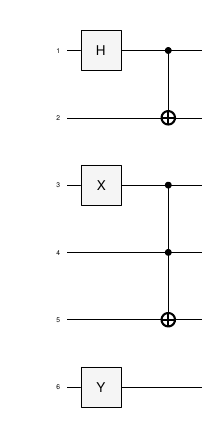}
    \caption{Synthetic quantum circuit encoded in $V$}
    \label{fig: circuit}
\end{figure}

Clearly, some of the wires in the circuit (representing qubits) do not interact and the underlying gate must have a Kronecker factorization. Indeed, our algorithm finds a unique $(4,8,2)$ prime factorization for the sparsity pattern. After supplying those sizes to a tensor decomposition algorithm, we obtain the Kronecker factorization
\begin{equation*}
    V = V_1 \otimes V_2 \otimes V_3.
\end{equation*}
Since $V$ is unitary, the factor matrices $V_i$ have orthogonal but not necessarily orthonormal columns. However, normalizing them as $V_i/\|V_i\|_H$ ensures they become unitary. Nevertheless, we must stress that this and other analytical methods can only solve relatively small problem sizes, much smaller than those quantum computing is expected to handle. Beyond analytical methods, in the quantum computing literature, Harrow and Montanaro \cite{harrow2013testing} proposed the product-state test, a probabilistic test for determining whether a gate is separable.

\section{Conclusion}
\label{se: conclusion}
In this article, we have presented a theory and algorithm for finding all possible Kronecker factorizations of a large sparse binary matrix. When encoding the sparsity pattern of real or complex matrices, factorizing binary matrices implicitly suggests suitable sizes for the factor matrices in (approximate) Kronecker factorizations. Such sparsity-informed guesses may produce exceedingly good approximations, particularly for system matrices stemming from PDE discretizations. In other applications, sparse binary matrices may represent graph adjacency matrices and factorizing them may uncover some latent structure in a network. We have subsequently proposed a graph visualization algorithm that exploits this structure to faithfully depict the nature of the interactions within the network. Although most real networks are certainly not factorizable, some are nevertheless accurately modeled as Kronecker graphs.

Regardless of their origin, binary matrices may admit multiple distinct Kronecker factorizations. For visualizing them, we have constructed a decomposition graph depicting the number of factorizations, their length and the sizes of the factor matrices entering each decomposition. Extending our framework to rectangular factor matrices is possible but quite irrelevant to the applications considered in this work, where decompositions with square factor matrices were always sought. Nevertheless, more general decompositions might have applications elsewhere and remain an interesting problem from a theoretical perspective. Even in the square case, the theoretical possibilities are way ahead of the practical reality. For instance, we are not even aware of a real network admitting multiple factorizations, although we cannot a priori exclude it.

\end{document}